\documentclass[11pt,reqno]{amsart}

%% Version 12:  10/22/10, 8:23am Cologne time.

\usepackage{amssymb,amsmath,epsfig}
\usepackage{amsfonts}
\usepackage{pst-plot}
\usepackage{verbatim}
\oddsidemargin = 0cm \evensidemargin = 0cm \textwidth = 16cm
\newcommand{\Z}{\mathbb{Z}}

\newcommand{\C}{\mathbb{C}}
\newcommand{\N}{\mathbb{N}}
\newcommand{\NF}{NF}

\newtheorem{theorem}{Theorem}[section]

\newtheorem{lemma}[theorem]{Lemma}
\newtheorem{corollary}[theorem]{Corollary}
\newtheorem{proposition}[theorem]{Proposition}
\newtheorem*{theorem*}{Theorem}

\theoremstyle{remark}
\newtheorem*{remark}{Remark}
\newtheorem*{remarks}{Some remarks}

\numberwithin{equation}{section}

\newcommand{\evar}{d}

\date{\today} 

%\subjclass[2000]{11P81, 05A17, 33D15}

\begin{document} 
\title[Inequalities for full rank differences]{Inequalities for full rank differences of 2-marked Durfee symbols}
\author{Kathrin Bringmann}
\address{Mathematical Institute\\University of
Cologne\\ Weyertal 86-90 \\ 50931 Cologne \\Germany}
\email{kbringma@math.uni-koeln.de}
\email{bkane@math.uni-koeln.de}
\author{Ben Kane}
%\address{Mathematical Institute\\University of
%Cologne\\ Weyertal 86-90 \\ 50931 Cologne \\Germany}
\thanks {The first author was partially supported by NSF grant DMS-0757907 and the Alfried-Krupp prize.}
\begin{abstract}
In this paper, we obtain infinitely many non-trivial identities and inequalities between full rank differences for $2$-marked Durfee symbols, a generalization of partitions introduced by Andrews.  A certain strict inequality, which almost always holds, shows that identities for Dyson's rank, similar to those proven by Atkin and Swinnerton-Dyer, are quite rare.  By showing an analogous strict inequality, we show that such non-trivial identities are also rare for the full rank, but on the other hand we obtain an infinite family of non-trivial identities, in contrast with the partition theoretic case.
\end{abstract}
\subjclass[2000]{05A20,11P82, 05A19} 
\keywords{$k$-marked Durfee symbols, rank, partitions, Ramanujan, congruences.}
\maketitle

\section{Introduction}

A {\it partition} of a non-negative integer $n$ is any non-increasing
sequence of positive integers whose sum is  $n$. As usual, let
$p\left(n\right)$ denote the number of partitions of $n$.  
The partition function satisfies the famous   ``Ramanujan congruences''
declaring that for all $n\geq 0$
\begin{align*}
p\left(5n+4\right)&\equiv 0\pmod{5},\\
p\left(7n+5\right)&\equiv 0\pmod{7},\\
p\left(11n+6\right)&\equiv 0\pmod{11}.
\end{align*}
In order to understand the congruences modulo $5$ and $7$ from a combinatorial point of view,  Dyson defined the  {\it rank of a partition} as  its  largest part minus its number of parts \cite{Dy}.   To simplify notation, for integers $0\leq r<t$,  we let $N\left(r,t;n\right)$ be the number of partitions of $n$ whose rank is congruent to $r$ modulo $t$ and we will denote the corresponding generating function by 
\begin{equation}\label{eqn:gtdef}
g_t\left(r\right)=g_t\left(r;q\right):=\sum_{n=0}^{\infty} N\left(r,t;n\right) q^n.
\end{equation}
Dyson conjectured that the congruence for $5n+4$ is explained by the fact that the rank modulo $5$ divides the partitions of $5n+4$ into $5$ equally sized classes, namely for every $r,s\in \Z$
\begin{equation}\label{eqn:mod5}
N\left(r,5;5n+4\right)=N\left(s,5,5n+4\right)
\end{equation}
holds for all $n\in \N_0$.  This implies the above congruence, since by \eqref{eqn:mod5}
$$
p\left(5n+4\right)=5N\left(0,5,5n+4\right)\equiv 0\pmod{5}.
$$
Similarly, Dyson conjectured that the congruence modulo $7$ is explained by the identity 
\begin{equation}\label{eqn:mod7}
N\left(r,7;7n+5\right)=N\left(s,7;7n+5\right)
\end{equation}
for all $r,s\in \Z$ and $n\in \N_0$.  
Dyson's rank conjectures were later proved by Atkin and Swinnerton-Dyer \cite{AtkinSwinnertonDyer1}.  On the other hand, Dyson's rank fails to divide the partitions of $11n+6$ in the same way, and he famously conjectured the existence of a new statistic which he called the ``crank'' and which would explain all three congruences simultaneously.  This statistic remained hidden until a proper definition was finally found in work of Andrews and Garvan \cite{AndrewsGarvan1,Garvan1}. 

Many further congruences exist for the partition function.  Their proofs frequently go through automorphic properties of certain generating functions.  For example, the first author and Ono \cite{BringmannOno2} have realized the rank generating function as the holomorphic part of a \begin{it}harmonic weak Maass form\end{it}, a certain non-holomorphic modular form (cf. \cite{BruinierFunke1} for a definition).  The special case 
$$
f\left(q\right):=1+\sum_{n=1}^{\infty} \Big(N\left(0,2;n\right)-N\left(1,2;n\right)\Big)q^n = 1+\sum_{n=1}^{\infty} \frac{q^{n^2}}{\left(-q\right)_n^2},
$$
with $\left(a;q\right)_n=\left(a\right)_n:=\left(1-a\right)\left(1-aq\right)\cdots \left(1-aq^{n-1}\right)$, is one of Ramanujan's mock theta functions and its place in the theory of automorphic forms was first realized by Zwegers \cite{Zwegers2}.  Using the theory of harmonic weak Maass forms, the first author and Ono \cite{BringmannOno2} have shown that for every prime power $p^j$ relatively prime to $6t$ (an extension to include the case when $t=p^\ell$ was given by the first author \cite{Bringmann2}) there are infinitely many non-nested arithmetic progressions $An+B$ ($0\leq B<A$) for which every $0\leq r<t$ and $n\in \N_0$ satisfy the congruence
\begin{equation}\label{eqn:BOcong}
N\left(r,t;An+B\right)\equiv 0\pmod{p^j}.
\end{equation}
For these choices of $A$ and $B$, this gives a refinement of the congruence
$$
p\left(An+B\right)\equiv 0 \pmod{p^j},
$$
which for $j=1$ were previously proven by Ono \cite{Ono1} and for $j>1$ were proven by Ahlgren and Ono \cite{AhlgrenOno1}.

The abundance of such congruences lead one naturally to ask which of these follow by equalities of the type given in \eqref{eqn:mod5} and \eqref{eqn:mod7} and to investigate in general when such equalities exist.  By considering conjugate partitions, one easily sees for every $1\leq r<t$ and every $n\in \N$ that 
\begin{equation}\label{eqn:conjugateidentity}
N\left(r,t;n\right)=N\left(t-r,t;n\right).
\end{equation}
Due to these trivial identities we may assume throughout the paper that $0\leq r<s\leq \frac{t}{2}$.  Under this assumption, we see that apart from the trivial identities in \eqref{eqn:conjugateidentity}, other identities such as \eqref{eqn:mod5} and \eqref{eqn:mod7} turn out to be quite rare.  Indeed, based on asymptotic formulas for ranks shown by the first author \cite{Bringmann1} using the circle method, the authors have shown \cite{BringmannKane1} that for $t$ odd and $0\leq r<s<\frac{t}{2}$, there are only finitely many $\left(r,s,t\right)$ for which the identity
$$
N\left(r,t;n\right)=N\left(s,t;n\right)
$$
holds for infinitely many $n\in \N$.  Specifically, there are infinitely many such $n$ if and only if $t\in \left\{5,7\right\}$ or $t=9$ and $\left(r,s\right)\in \left\{ \left(0,4\right),\left(3,4\right) \right\}$.

\begin{theorem}[\cite{BringmannKane1}]\label{thm:RankDifferences}
Assume that $t\geq 11$ is an odd integer.
Then for $0\leq r<s\leq \frac{t-1}{2}$ we have for $n>N_{r,s,t}$, where $N_{r,s,t}$ is an explicit constant, the inequality
$$
N\left(r,t;n\right)> N\left(s,t;n\right). 
$$ 
\end{theorem}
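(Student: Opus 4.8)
The plan is to extract precise asymptotics for $N(r,t;n)$ as $n\to\infty$ and then compare the leading behavior of $N(r,t;n)$ and $N(s,t;n)$ term by term. The starting point is the circle-method analysis of the rank generating functions $g_t(r;q)$ carried out by the first author in \cite{Bringmann1}: these functions, being essentially holomorphic parts of harmonic weak Maass forms of weight $1/2$, have an exact (Rademacher-type) expansion for their coefficients, or at least an asymptotic expansion of the form
$$
N(r,t;n) = \frac{C}{\sqrt{n}}\, e^{\pi\sqrt{2n/3}\cdot\lambda} \Big( \alpha_{r,t} + O\big(n^{-1/2}\big) \Big),
$$
where the dominant exponential rate is governed by the cusp $1/t$ (or more precisely the cusp where $g_t(r)$ has its largest singularity), and the constant $\alpha_{r,t}$ depends on $r$ and $t$ through a cosine factor coming from the behavior of $g_t(r)$ at that dominant cusp. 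The key structural fact is that the $r$-dependence of the main term enters only through a single trigonometric factor of the shape $\cos(2\pi r/t)$ or $\sin\big(\tfrac{\pi(2r+1)}{2t}\big)$-type expression; I would first isolate exactly which cusp dominates and write down this main-term constant explicitly.

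Second, I would compare the main terms. Since $0\le r<s\le\frac{t-1}{2}$ and $t\ge 11$, the relevant trigonometric factor is strictly monotone on the range $[0,(t-1)/2]$, so $\alpha_{r,t}>\alpha_{s,t}$ with a strictly positive gap that can be bounded below explicitly in terms of $t$ (using $t\ge 11$ to keep $r,s$ away from the point where the factor would vanish or where monotonicity fails — this is precisely why small $t$, namely $t\in\{5,7,9\}$, are genuinely exceptional). Thus $N(r,t;n)-N(s,t;n)$ has main term $\frac{C}{\sqrt n}e^{\pi\sqrt{2n/3}\cdot\lambda}(\alpha_{r,t}-\alpha_{s,t})$ with a positive constant, and the inequality follows once $n$ is large enough that this beats the error terms.

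Third — and this is the point requiring real care rather than just invocation — I would make the constant $N_{r,s,t}$ \emph{effective}. This means not merely citing an $O$-bound but tracking: (i) an explicit lower bound for $\alpha_{r,t}-\alpha_{s,t}$; (ii) an explicit upper bound for the total contribution of all subdominant terms in the Rademacher/circle-method expansion for \emph{each} of $N(r,t;n)$ and $N(s,t;n)$, including the contributions of non-dominant cusps and the error from truncating the Farey dissection; and (iii) a uniform handling of the principal-part and Bessel-function terms so that the threshold can be written down. I would organize (ii) by bounding each Kloosterman-type sum trivially and summing the resulting Bessel terms, which is wasteful but yields a clean closed form for $N_{r,s,t}$.

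The main obstacle is step three: getting a genuinely explicit, reasonably clean constant $N_{r,s,t}$ out of the weight-$1/2$ harmonic Maass form machinery, since the exact formulas for the coefficients of mock modular forms carry several error contributions (the non-holomorphic completion terms, multiple cusps with comparable but smaller exponential rates, and the Bessel-function tails) that must all be dominated simultaneously and uniformly in $r,s$. The asymptotic inequality itself is essentially immediate from the monotonicity of the trigonometric main-term factor; the work is entirely in the bookkeeping needed to convert "for $n$ large" into "for $n>N_{r,s,t}$" with $N_{r,s,t}$ explicit.
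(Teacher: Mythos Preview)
This theorem is not proven in the present paper at all: it is quoted verbatim as a result of \cite{BringmannKane1}, and the paper uses it only as a black box (chiefly in Section~\ref{section:FullRankInequality}). So there is no ``paper's own proof'' to compare against here.

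That said, your plan is an accurate summary of how \cite{BringmannKane1} actually establishes the result, drawing on the circle-method asymptotics of \cite{Bringmann1}: the main term for $N(r,t;n)$ carries an $r$-dependent trigonometric factor, this factor is strictly monotone on $0\le r\le \frac{t-1}{2}$ once $t\ge 11$, and the remaining work is the effective bookkeeping to dominate all subdominant cusps, Kloosterman sums, and Bessel tails so that $N_{r,s,t}$ can be written down explicitly. Your identification of the main obstacle (making the constant effective rather than merely proving the asymptotic inequality) is also correct; in \cite{BringmannKane1} this is handled by a careful but somewhat wasteful bounding of the error terms, much as you describe.
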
 
\begin{remark}
The theory of harmonic weak Maass forms has essentially reduced identities relating the ranks to a calculation of finitely many Fourier coefficients, but inequalities such as those contained in Theorem \ref{thm:RankDifferences} are more difficult to prove because they require a careful analysis of the asymptotic growth of the coefficients of these generating functions and cannot be proven by merely checking the inequality for finitely many Fourier coefficients.
\end{remark}
One sees quite clearly from Theorem \ref{thm:RankDifferences} why the rank fails to explain the congruence for $t=11$, whereas the behavior for $t<11$ is quite different.  One finds that in these cases the direction of the above inequality depends on the congruence class of $n$ modulo $t$.  Theorem \ref{thm:RankDifferences} essentially completed the determination of the congruence classes modulo $t$ exhibiting positivity, negativity, and equality.  The investigation into such inequalities for fixed small $t$ was initiated by Andrews and Lewis \cite{AndrewsLewis1} and Lewis \cite{Lewis1} (these theorems involve $t$ even), while the first author proved the inequalities for $t=3$ \cite{Bringmann1} which were conjectured in \cite{AndrewsLewis1}.  Although one expects a theorem similar to Theorem \ref{thm:RankDifferences} to hold for the crank modulo $t$ for all $t\geq t_0$ beyond some boundary $t_0$, it is clear that $t_0>11$ must hold, as the crank modulo $11$ divides the partitions of $11n+6$ into equally sized classes.  We note that the proof of the inequalities for the cranks differences would be easier, since while the rank generating function is the holomorphic part of a harmonic weak Maass form \cite{BringmannOno2}, the corresponding crank generating function is a holomorphic modular form.

A recent generalization of partitions called $k$-marked Durfee symbols, whose definition will be recalled in Section \ref{section:Durfee}, was given by Andrews \cite{Andrews1}.  He used these $k$-marked Durfee symbols to give a combinatorial interpretation of the $k$-th rank moments of partitions defined by Atkin and Garvan \cite{AtkinGarvan1}.  The $1$-marked Durfee symbols (or simply, Durfee symbols) are in one-to-one correspondence with partitions.  One is naturally led to a definition of a certain rank statistic for $k$-marked Durfee symbols, which Andrews called the \begin{it}($k$-th) full rank\end{it}.  By work of the first author, Garvan, and Mahlburg \cite{BGM1}, it follows that the generating function for those $k$-marked Durfee symbols with full rank congruent to $r$ modulo $t$ is a \begin{it}quasimock theta function\end{it}, which is essentially the holomorphic part of linear combinations of harmonic weak Maass forms and their derivatives (thus generalizing quasimodular forms). Indeed, they show that the analytic continuation of their (6.2) with $x_i=x^i$ may be written as a linear combination of terms of the type $\frac{\partial}{\partial x^r} R\left(x;q\right)$ and that specialization of $\frac{\partial}{\partial x^r} R\left(x;q\right)$ to a root of unity $x$ is a quasimock theta function.  Following the notation for the rank, we will denote the number of $k$-marked Durfee symbols of size $n$ with full rank congruent to $r$ modulo $t$ by $\NF_k\left(r,t;n\right)$.  A number of relations between the rank and the full rank leads one to search for identites such as those in equations \eqref{eqn:mod5} and \eqref{eqn:mod7} for the full rank.  For example, in the case $t=5$, Andrews proved in Theorem 17 of \cite{Andrews1} that for all integers $r,s\in \Z$ we have
\begin{equation}\label{eqn:fullmod5}
\NF_2\left(r,5;5n\pm 1\right)=\NF_2\left(s,5;5n\pm 1\right).
\end{equation}
Thus the full rank modulo $5$ divides the $k$-marked Durfee symbols of $5n\pm 1$ into 5 equally sized classes and gives a combinatorial explanation for the congruence
$$
\NF_2\left(5n\pm 1\right)\equiv 0\pmod{5},
$$
where $\NF_2\left(n\right)$ denotes the number of $2$-marked Durfee symbols of $n$.  Andrews also proved similar results for the modulus $t=7$.  Noting the automorphic properties for the corresponding generating functions as proven in \cite{BGM1}, \eqref{eqn:fullmod5} can again be reduced to a check of finitely many Fourier coefficients of the associated generating function.  In this paper, we will restrict to the case $k=2$ and employ the asymptotic growth of the coefficients of these generating functions to show that equalities such as \eqref{eqn:fullmod5} are again quite rare as we vary $t$.

As we will recall in Section \ref{section:Durfee}, the symmetry given by conjugation for the rank of partitions (or equivalently, $1$-marked Durfee symbols) generalizes to a symmetry for the full rank.  This gives trivial identities for the full rank such as those obtained in \eqref{eqn:conjugateidentity}.  For this reason, we may restrict ourselves to $0\leq r <s\leq \frac{t}{2}$.  Due to a technical difficulty occuring when $3\mid t$, we shall first assume that $\left(t,6\right)=1$.  In this case, one may prove a result for the full rank resembling Theorem \ref{thm:RankDifferences}.
\begin{theorem}\label{thm:GeneralFullRankInequality}
Suppose that $t>7$ is a positive integer with $\left(t,6\right)=1$.  Then for $0\leq r < s\leq \frac{t-1}{2}$ with $\left(r,s\right)\neq \left(1,2\right)$, we have, for sufficiently large $n$,
$$
\NF_2\left(r,t;n\right)> \NF_2\left(s,t;n\right).
$$
\end{theorem}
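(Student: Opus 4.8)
The plan is to mimic the circle-method approach that proved Theorem \ref{thm:RankDifferences}, now applied to the difference generating function
\[
D_{r,s,t}(q):=\sum_{n\geq 0}\bigl(\NF_2(r,t;n)-\NF_2(s,t;n)\bigr)q^n .
\]
First I would recall from \cite{BGM1} that $\sum_n \NF_2(r,t;n)q^n$ is the holomorphic part of a quasimock theta function, i.e.\ it arises from specializing $\frac{\partial}{\partial x}R(x;q)$ (and lower-order pieces) at a primitive $t$-th root of unity $x=\zeta_t^j$. Taking the difference over residue classes $r$ versus $s$, the purely modular (Eisenstein/theta) contributions that are independent of the root of unity cancel, so $D_{r,s,t}$ is governed by finite linear combinations of $\frac{\partial}{\partial x}R(\zeta_t^j;q)$ and its quasimodular companions. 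The key point, exactly as in the rank case, is that the dominant exponential growth of the Fourier coefficients is controlled by the behavior of these mock-type objects near rational points $\frac{h}{k}$ on the unit circle; one extracts, via a Wright-type circle method / Tauberian analysis, a main term of the shape
\[
\NF_2(r,t;n)-\NF_2(s,t;n)\sim C\,\frac{n^{\alpha}}{?}\,e^{c\sqrt n}\cdot\Bigl(\text{trigonometric factor in }r,s,t\Bigr),
\]
where the trigonometric factor comes from the cotangent/cosecant singularities of $R$ at roots of unity. Establishing this asymptotic expansion with an \emph{explicit} error bound — so that the leading term genuinely dominates for all $n$ above an effective threshold — is the crucial input; much of it can be imported from the asymptotic analysis underlying \cite{Bringmann1} and \cite{BGM1}, adapted to the second rank moment weighting that distinguishes $k=2$ from $k=1$.

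Next I would isolate which rational points $\frac{h}{k}$ contribute to the main term. Because $(t,6)=1$, the only poles of the relevant specializations near the unit circle that survive differencing occur at denominators $k$ with $k\mid t$ (together with a universal $k=1$ contribution that is killed by subtracting two residue classes), so the dominant singularity sits at a point whose contribution carries a factor like $\sin^2(\pi r/t)$-type or $\csc(\pi r/t)$-type expression. Comparing $r$ and $s$ then reduces, for $n$ large, to comparing explicit finite trigonometric sums
\[
T_t(r):=\sum_{\text{relevant }j}(\text{weight})\cdot f\!\left(\tfrac{\pi j r}{t}\right),
\]
and the inequality $\NF_2(r,t;n)>\NF_2(s,t;n)$ for large $n$ follows once one shows $T_t(r)>T_t(s)$ whenever $0\le r<s\le \frac{t-1}{2}$. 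The monotonicity of the appropriate trigonometric kernel on the interval $[0,\frac{t-1}{2}]$ is what yields the clean statement, and the exceptional pair $(r,s)=(1,2)$ is precisely where a secondary singularity (at a denominator dividing $t$ other than $1$) becomes comparable to the primary one, so the sign of the difference there is not determined by the leading term alone — hence its exclusion. I would verify the $(1,2)$ exclusion is genuinely necessary and the threshold $t>7$ is sharp by checking the small cases against Andrews' identities \eqref{eqn:fullmod5}.

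The main obstacle I anticipate is \emph{not} the combinatorics of the trigonometric comparison but the uniformity of the asymptotic expansion: one needs an error term that is uniform in $r$, $s$, and $t$ (or at least with a threshold $N$ that can be made explicit as a function of these), and in the quasimock setting the derivative $\frac{\partial}{\partial x}R(x;q)$ introduces extra polynomial-in-$n$ factors and a more delicate modular transformation than the mock theta function $f(q)$ itself, since differentiating in $x$ before specializing produces Eichler-integral-type (period integral) pieces whose contribution near the dominant arc must be controlled. Handling these derivative terms — showing they are of strictly lower order than the main exponential term away from the exceptional configurations — is the technical heart of the argument. A secondary difficulty is bookkeeping the second rank-moment normalization from \cite{Andrews1,BGM1} correctly so that the constant $C$ and the trigonometric factor come out with the right sign; getting that sign wrong would flip the inequality. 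Once the uniform asymptotic with explicit error is in hand, the proof concludes by the elementary observation that an exponentially large main term with a strictly positive trigonometric coefficient dominates all lower-order contributions for $n$ beyond an effectively computable bound.
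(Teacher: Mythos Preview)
Your approach is genuinely different from the paper's, and considerably harder than necessary. The paper does not redo the circle method for $\NF_2$. Instead, it exploits Andrews' identity
\[
R_2(x,x^2;q)=\frac{R(x;q)-R(x^2;q)}{(x-x^2)(1-x^{-3})},
\]
valid whenever $x^3\neq 1$. Since $(t,6)=1$, no specialization $x=\zeta_t^j$ hits a cube root of unity, so $f_t(r,r+1)$ is a \emph{finite $\mathbb{Q}$-linear combination of rank differences} $g_t(a,b)$ --- no derivatives of $R$, no quasimock machinery, no period integrals. The paper then massages this linear combination (Proposition \ref{prop:GeneralSetupIdentity}, Lemma \ref{lem:1mod3}, and two companion lemmas for the other residues of $d\pmod 3$) until each summand is a single $g_t(a,b)$ with $0\le a<b\le\frac{t-1}{2}$, and invokes Theorem \ref{thm:RankDifferences} term by term. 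All of the analytic input is thus imported wholesale from the rank case; the new work is purely algebraic/combinatorial bookkeeping.

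Your sketch contains two concrete misreadings of the structure. First, the exceptional pair $(r,s)=(1,2)$ is not a case where two singularities compete and the sign becomes indeterminate: it is an exact identity $\NF_2(1,t;n)=\NF_2(2,t;n)$ for all odd $t$ and all $n$ (Theorem \ref{thm:FullRankIdentity}(1)), and in the paper's framework it falls out because the associated sum of $g_t$ terms is empty. If your trigonometric comparison were carried out correctly, the leading coefficient for $(1,2)$ would be exactly zero, not merely contested by a secondary term. Second, precisely because of the displayed identity above, for $(t,6)=1$ the difference $D_{r,s,t}$ involves no $\partial_x R$ contributions at all, so your anticipated ``main obstacle'' of controlling derivative/Eichler-integral pieces never materializes. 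Your route could probably be pushed through --- ultimately it would reprove Theorem \ref{thm:RankDifferences} after first unwinding the same linear combination --- but it hides the mechanism that makes the problem easy and gives the wrong explanation for why $(1,2)$ must be excluded.
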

However, in contrast to the usual rank, in the case $\left(r,s\right)=\left(1,2\right)$ an infinite family (in the variable $t$) of non-trivial identities similar to \eqref{eqn:fullmod5} hold for all $t\neq 3$ odd and $n\in \N$.
\begin{theorem}\label{thm:FullRankIdentity}
\noindent
\begin{enumerate}
\item For every odd $t$ and for every $n\in \N$, we have 
$$
\NF_2\left(1,t;n\right)=\NF_2\left(2,t;n\right).
$$
\item
For $t$ even and $n\in \N$, we have
$$
\NF_2\left(1,t;n\right)\leq \NF_2\left(2,t;n\right),
$$
and equality holds if and only if $n\in \left\{ 0,\dots, \frac{t}{2}, \frac{t}{2}+2\right\}$.
\item For every $n\in \N$ one has the equality
$$
\NF_2\left(1;n\right)=\NF_2\left(2;n\right),
$$
where $\NF_2\left(r;n\right)$ denotes the number of 2-marked Durfee symbols with full rank equal to $r$.
\end{enumerate}
\end{theorem}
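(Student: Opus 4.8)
The plan is to obtain all three statements from a single evaluation of the full-rank generating function at roots of unity, starting from Andrews's explicit generating function for $2$-marked Durfee symbols. Set (in the notation of \cite{Andrews1,BGM1})
\[
\mathcal{R}_2(x;q):=R_2(x,x^2;q)=\sum_{n\ge0}\sum_{m\in\Z}\NF_2(m;n)\,x^m q^n ,
\]
the generating function recording each $2$-marked Durfee symbol by its exact full rank; by the conjugation symmetry recalled in Section \ref{section:Durfee} it is invariant under $x\mapsto x^{-1}$. Orthogonality of $t$-th roots of unity gives
\[
\sum_{n\ge0}\NF_2(r,t;n)\,q^n=\frac1t\sum_{j=0}^{t-1}\zeta_t^{-jr}\,\mathcal{R}_2\bigl(\zeta_t^{\,j};q\bigr),\qquad\zeta_t:=e^{2\pi i/t},
\]
so parts (1) and (2) become statements about
\[
\Delta_t(q):=\sum_{n\ge0}\bigl(\NF_2(1,t;n)-\NF_2(2,t;n)\bigr)q^n=\frac1t\sum_{j=1}^{t-1}\bigl(\zeta_t^{-j}-\zeta_t^{-2j}\bigr)\mathcal{R}_2\bigl(\zeta_t^{\,j};q\bigr).
\]
Part (3) will then come for free from part (1): for fixed $n$ only finitely many full ranks occur among the $2$-marked Durfee symbols of $n$, so choosing an odd $t$ larger than twice this bound forces $\NF_2(r,t;n)=\NF_2(r;n)$ for $r\in\{1,2\}$, whence $\NF_2(1;n)=\NF_2(2;n)$.

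To evaluate $\mathcal{R}_2(\zeta_t^{\,j};q)$ I will use Andrews's formula for $R_2(x_1,x_2;q)$ from \cite{Andrews1} together with the reformulation in \cite{BGM1}: after analytic continuation, $\mathcal{R}_2(x;q)$ can be written as an explicit combination, with coefficients rational in $x$, of the ordinary rank generating function $R(x;q)$ and its $x$-derivatives, plus elementary terms involving only $1/(q)_\infty$. Substituting $x=\zeta_t^{\,j}$ and feeding in the classical expansions of $R(\zeta_t^{\,j};q)$ and its derivatives in terms of theta functions and (derivatives of) generalized Lambert series --- as in the work of Atkin and Swinnerton-Dyer and as carried out in \cite{BGM1} --- turns $\Delta_t(q)$ into an explicit sum over $j$ of such pieces. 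The weights $\zeta_t^{-j}-\zeta_t^{-2j}$ are exactly what is needed so that, for $t$ odd, these theta and Lambert-series contributions cancel in pairs and $\Delta_t(q)\equiv0$, which is part (1); the oddness of $t$ is what makes the requisite pairing of the residues $j\bmod t$ available. For $t$ even the same cancellation handles every $j\ne t/2$, but the term $j=t/2$, where $\zeta_t^{\,j}=-1$, is left unmatched, so $\Delta_t(q)$ collapses to one explicit piece, built from $\mathcal{R}_2(-1;q)$ --- equivalently from $R(-1;q)$ (Ramanujan's mock theta function $f$), its derivative, and $1/(q)_\infty$ --- which one identifies with a concrete $q$-series. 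Its coefficients are all $\le0$, giving $\NF_2(1,t;n)\le\NF_2(2,t;n)$, and they vanish in precisely the degrees $0,\dots,\tfrac{t}{2},\tfrac{t}{2}+2$ (with the gap at $\tfrac{t}{2}+1$), yielding the equality clause of part (2).

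I expect the main obstacle to be this evaluation of $\Delta_t(q)$: determining the precise shape of $\mathcal{R}_2(x;q)$ as a combination of $R(x;q)$, $R'(x;q)$ and elementary terms; controlling the spurious poles of these functions at roots of unity so that the substitutions and pairings are legitimate; and then executing the pairwise cancellation in the odd case and, in the even case, computing the surviving series exactly. The even case is the more delicate, since the equality set $0,\dots,\tfrac{t}{2},\tfrac{t}{2}+2$ has to come out on the nose --- one must in particular show the coefficient in degree $\tfrac{t}{2}+1$ is strictly negative while that in degree $\tfrac{t}{2}+2$ vanishes --- which forces an exact computation of the low-order coefficients of the leftover series, not merely a bound. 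For part (3) in isolation there is a softer, purely combinatorial route --- an explicit bijection between the $2$-marked Durfee symbols of $n$ with full rank $1$ and those with full rank $2$ --- but the root-of-unity computation is in any case unavoidable for parts (1) and (2).
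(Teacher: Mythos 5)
Your overall framework (extract $f_t(1,2)$ by orthogonality of $t$-th roots of unity, deduce part (3) from part (1) by letting $t$ be odd and large for fixed $n$) matches the paper, and the deduction of (3) is fine. But the core of the argument is asserted rather than carried out, and for part (2) the asserted structure is actually wrong. For part (1), the claimed ``pairwise cancellation'' for $t$ odd is the whole content of the statement; the mechanism that makes it work is the identity $R_2(x,x^2;q)=\frac{R(x;q)-R(x^2;q)}{(x-x^2)(1-x^{-3})}$ (no $x$-derivatives of $R$ enter for $k=2$, since $x$ and $x^2$ are distinct), after which the weight $\zeta_t^{j}-\zeta_t^{2j}$ cancels the factor $x-x^2$, one expands $(1-\zeta_t^{-3j})^{-1}$ as a polynomial in $\zeta_t^{j}$, and the substitution $j\mapsto \overline{2}j$ (legitimate because $2$ is invertible mod $t$ for $t$ odd) together with the conjugation symmetry $g_t(r)=g_t(-r)$ forces the resulting combination of rank differences to vanish. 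None of this is in your write-up, and the degenerate terms $j=t/3,\,2t/3$ when $3\mid t$ (where the displayed identity has a removable singularity) must be handled separately --- you flag the pole issue as an ``expected obstacle'' but the theorem claims all odd $t$, including multiples of $3$.

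For part (2) the gap is not just one of execution: your claim that for $t$ even all terms with $j\neq t/2$ cancel and $\Delta_t(q)$ collapses to a piece built solely from $\mathcal{R}_2(-1;q)$ cannot be correct. The paper shows (via the partial-fraction identity $(x-y+x^{-1}-y^{-1})R_2(x,y;q)=G(x;q)-G(y;q)$ and the fact that $j\mapsto 2j$ is two-to-one onto the even residues) that
\begin{equation*}
f_t(1,2)=\tfrac12\bigl(g_t(0)-g_{t/2}(0)\bigr)=-\tfrac12\,g_t\!\left(\tfrac{t}{2}\right)=-\tfrac12\sum_{n\ge 0}N\!\left(\tfrac{t}{2},t;n\right)q^n,
\end{equation*}
which receives contributions from \emph{all} $j$ and depends on $t$ in an essential way; a single fixed series such as $\mathcal{R}_2(-1;q)$ (rescaled by $1/t$) has a $t$-independent set of vanishing coefficients and so cannot produce the equality set $\{0,\dots,\frac{t}{2},\frac{t}{2}+2\}$ for every even $t$. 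Moreover, even with the correct identity in hand, pinning down that equality set is not a matter of ``computing low-order coefficients of the leftover series'': one must prove that $N(\frac{t}{2},t;n)=0$ exactly for $n\le\frac{t}{2}$ and $n=\frac{t}{2}+2$, which the paper does by exhibiting, for every other $n>\frac{t}{2}$, an explicit partition of $n$ with rank $\frac{t}{2}$ (of the form $(m+\frac{t}{2},1,\dots,1)$ or $(m+\frac{t}{2},2,1,\dots,1)$) and by checking directly that no partition of $\frac{t}{2}+2$ has rank $\pm\frac{t}{2}$. These pieces would need to be supplied before the proposal constitutes a proof.
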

\begin{remarks}
\noindent
\begin{enumerate}
\item In the case that $t\neq 3$ is odd, Theorem \ref{thm:FullRankIdentity} (1) shows that there is always an identity which does not come from the aforementioned trivial conjugation symmetry, in contrast with the corresponding result for partitions.
\item 
Although Theorem \ref{thm:RankDifferences} only applies to the case when $t$ is odd, a similar result is expected for the rank when $t$ is even.  Hence in this case it is interesting to note that Theorem \ref{thm:FullRankIdentity} (2) gives an inequality for the full rank in the opposite direction of what is expected for the rank.
\item The inequalities implied by Theorem \ref{thm:FullRankIdentity} (2) are each proven through an identity followed by an injective map from one type of partitions into another.  Hence the full content of Theorem \ref{thm:FullRankIdentity} is really concerned with identities.  Such identities may theoretically be proven by using the theory of harmonic weak Maass forms to show that both sides of the identity correspond to the same harmonic weak Maass form.  However, since the calculation would get quite tedious, we choose a more direct approach in this paper.
\end{enumerate}
\end{remarks}

In addition to showing identities such as those in equations \eqref{eqn:mod5} and \eqref{eqn:mod7}, Atkin and Swinnerton-Dyer also proved that in many cases the difference of two rank modulo $5$ and $7$ generating functions are modular forms.  For example, they showed that 
 \begin{equation}\label{eqn:hickersonexample}
\sum_{n=0}^{\infty}\Big(N\left(0,7;7n+6\right)- N\left(1,7;7n+6\right)\Big)q^n =  -\frac{\left(q;q^7\right)_{\infty}^2\left(q^6;q^7\right)_{\infty}^2\left(q^7;q^7\right)_{\infty}^2}{\left(q\right)_{\infty}}.
\end{equation} 
Such identities are far more abundant, and are now explained by the fact that the rank generating function is the holomorphic part of a harmonic weak Maass form and hence certain differences are modular forms, with a large infinite class of such modular differences proven by the first author, Ono, and Rhoades \cite{BringmannOnoRhoades1}.   This has led to a further investigation of the relevant harmonic weak Maass forms in order to establish identities such as \eqref{eqn:hickersonexample} (for example, \cite{AhlgrenTreneer1,Dewar1}).

Based on a relationship between the full rank generating functions and those of the rank generating functions, we show in Section \ref{section:ASD-identities} a number of infinite product identities for the full rank paralleling equation \eqref{eqn:hickersonexample}.  For example, we obtain the equality
\begin{equation}\label{eqn:infprod}
\sum_{n=0}^{\infty}\Big(\NF_2\left(0,7;7n+3\right)-\NF_2\left(1,7;7n+3\right)\Big)q^n = \frac{\left(q^7;q^7\right)_{\infty}}{\left(q^2;q^7\right)_{\infty}\left(q^5;q^7\right)_{\infty}}.
\end{equation}
We note that in the case $t=5$, the identities in Section \ref{section:ASD-identities} were proven by Keith in Theorems 1 and 2 of \cite{Keith}.  He considers general $k$, but restricts himself to the special case $t=2k+1$ and exploits identities of the type
$$
\NF_k\left(r,2k+1;n\right)=\NF_k\left(s,2k+1;n\right)
$$
when $\left(r,2k+1\right)=\left(s,2k+1\right)$.  Theorem \ref{thm:GeneralFullRankInequality} implies that such identities are rare when we restrict to the case $k=2$ but allow general modulus $t$, so we include Keith's result in Section \ref{section:ASD-identities} in order to list all tuples $\left(r,s,t\right)$ which give equalities of this type.

The paper is organized as follows.  We give the definition of $k$-marked Durfee symbols and the full rank in Section \ref{section:Durfee}.  In Section \ref{section:rankfullrank} we show a linear relationship between the generating function
\begin{equation}\label{eqn:adjsumtype}
f_t\left(r,r+1\right)=f_t\left(r,r+1;q\right):=\sum_{n=0}^{\infty}\Big(\NF_2\left(r,t;n\right)-\NF_2\left(r+1,t;n\right)\Big)q^n
\end{equation}
and rank generating functions, which will be the basis for most of our results.  In Section \ref{section:FullRankIdentity}, we show Theorem \ref{thm:FullRankIdentity}, which relates $\NF_2\left(1,t;n\right)$ to $\NF_2\left(2,t;n\right)$.  We show infinite product identities for the full rank such as \eqref{eqn:infprod} in Section \ref{section:ASD-identities}.  Building on the work from Section \ref{section:rankfullrank}, Section \ref{section:FullRankInequality} is devoted to proving the inequality given in Theorem \ref{thm:GeneralFullRankInequality} by showing that all but finitely many coefficients of $f_t\left(r,r+1\right)$ are positive under proper restrictions for $r$.  In Section \ref{section:smallinequalities}, we conclude the paper with a series of inequalities between $\NF_2\left(r,t;n\right)$ and $\NF_2\left(s,t;n\right)$ for small $t$.

\section*{Acknowledgements}
The authors thank W. Keith for making available his preprint prior to publication.  The authors would also like to thank the referees for helpful and detailed reports.

\section{Durfee symbols}\label{section:Durfee}
In this section we will recall Andrews' definitions \cite{Andrews1} for Durfee symbols, $k$-marked Durfee symbols, and the full rank for $k$-marked Durfee symbols.  The \begin{it}Durfee symbols of size $n$\end{it} are given by an integer $d$ and two nonincreasing sequences of integers $d\geq a_1\geq \cdots \geq a_m$ and $d\geq b_1\geq \cdots\geq b_{\ell}$ as in the following representation:
$$
\left(\begin{matrix} a_1&\dots&a_m\\ b_1&\dots & b_{\ell}\end{matrix}\right)_d,
$$
so that 
$$
d^2+\sum_{i=1}^{m} a_i + \sum_{j=1}^{\ell}b_j=n.
$$
Recall that the largest square in the Ferrers diagram 
of a partition is referred to as the \begin{it}Durfee square\end{it} of the partition.  Then the above Durfee symbol corresponds to the partition with Durfee square of size $d$, columns of length $a_1,\dots,a_m$ to the right of the Durfee square, and rows of length $b_1,\dots, b_{\ell}$ below the Durfee square.  For example, the Durfee symbol
$$
\left(\begin{matrix} 3&1&1\\ 2&1 & \end{matrix}\right)_3
$$
corresponds to the partition $\left(6,4,4,2,1\right)$.  Notice that the rank of the partition corresponding to a Durfee symbol is precisely $m-\ell$, the length of the first row of the Durfee symbol minus the length the second row.  In the above example, this gives a partition of rank $1$.

To define $k$-marked Durfee symbols, we require $k$ copies of the integers, so that each element $a_i$ and $b_j$ is given a subscript between $1$ and $k$ indicating which copy of the integers it is contained in.  A \begin{it}$k$-marked Durfee symbol\end{it} is a Durfee symbol with the following restrictions on the allowable parts $a_i$, $b_j$ and their subscripts:
\noindent
\begin{enumerate}
\item  The sequence of parts and the sequence of subscripts in each row must be non-increasing.
\item  Each of the subscripts $1,2,\dots,k-1$ occurs at least once in the top row.
\item  If $M_1, N_2, \dots V_{k-2},W_{k-1}$ are the largest parts with their respective subscripts in the top row, then all parts in the bottom row with subscript 1 lie in $[1,M]$, with subscript $2$ lie in $[M,N]$, $\dots$, with subscript $k-1$ lie in $[V,W]$, and with subscript $k$ lie in $[W,S]$, where $S$ is the side of the Durfee square.
\end{enumerate}
For a $k$-marked Durfee symbol $\delta$, let $\tau_i\left(\delta\right)$ (resp. $\beta_i\left(\delta\right)$) be the number of entries in the top (resp. bottom) row with subscript $i$.  Then the \begin{it}$i$-th rank\end{it} is defined by 
$$
\rho_i\left(\delta\right):=
\begin{cases} 
\tau_i\left(\delta\right) - \beta_i\left(\delta\right)-1 & \text{for }1\leq i <k,\\
\tau_i\left(\delta\right) - \beta_i\left(\delta\right) & \text{for }i=k.
\end{cases}
$$
We refer to $\sum_{i=1}^{k} i \rho_i\left(\delta\right)$ as the \begin{it}($k$-th) full rank\end{it} of $\delta$.  

The symmetry of conjugation for the rank is observed in the Durfee symbol as simply swapping the first and second rows.  For the $k$-th full rank, one defines a similar conjugation action.  For $\delta$ a $k$-marked Durfee symbol, we define a conjugate $k$-marked Durfee symbol $\overline{\delta}$ as follows.  One first interchanges the parts with subscript $k$ in the top and bottom rows as in the $k=1$ case.  For $1\leq i<k$, the largest part with subscript $i$ remains in the top row, while all other parts with subscript $i$ from the top and bottom rows are interchanged.  This preserves condition (3) in the definition and the $i$-th rank of $\overline{\delta}$ is
$$
\rho_i\left(\overline{\delta}\right) = \left(1+\beta_i\left(\delta\right)\right) - \left(\tau_i\left(\delta\right)-1\right) - 1 = - \rho_i\left(\delta\right).
$$
Hence one may always obtain trivial identities such as those in \eqref{eqn:conjugateidentity} by the symmetry
\begin{equation}\label{eqn:frankconj}
\sum_{i=1}^{k}i\rho_i\left(\overline{\delta}\right) = - \sum_{i=1}^{k}i\rho_i\left(\delta\right).
\end{equation}
We will be interested in observing other equalities which do not follow from the above observation.  In order to do so, we will restrict ourselves to the case $k=2$ and work with a relation given between the generating functions for the usual rank for partitions and the full rank on $2$-marked Durfee symbols.

\section{Relating the full rank to the classical rank}\label{section:rankfullrank}
Much as Atkin and Swinnerton-Dyer did for partitions, we will work with the generating function for full ranks of $2$-marked Durfee symbols.  We begin with a series of necessary definitions.  

By classifying partitions in terms of the size of the Durfee square, the classical rank generating function is given (cf. \cite{HardyWright1} Chapter 18, Section 19.7, Lemma 7.9 of \cite{Garvan1}) by
\begin{equation}\label{eqn:Rrank}
R\left(w;q\right):=
1+\sum_{n=1}^{\infty}\sum_{m=-\infty}^{\infty} N\left(m,n\right)w^mq^n= \sum_{n=0}^{\infty} \frac{q^{n^2}}{\left(wq\right)_n \left(w^{-1}q\right)_n}= 
\frac{\left(1-w \right)}{\left(q\right)_{\infty}} 
\sum_{n \in \Z} 
\frac{\left(-1\right)^n \, q^{\frac{n}{2}\left(3n+1\right) }}{1 - w q^n}.
\end{equation}
We define the generating function
\begin{equation}\label{eqn:gtrsdef}
g_t\left(r,s\right)=g_t\left(r,s;q\right):=
 \sum_{n=0}^{\infty} 
 \Big( N\left(r,t;n\right)  -N\left(s,t;n\right) \Big)q^n 
 = 
 \frac{1}{t}
  \sum_{j=1}^{t-1}
  \left(  \zeta_t^{rj}  -\zeta_t^{sj} \right)
 R\left(\zeta_t^j;q\right).
\end{equation}
Following Andrews \cite{Andrews1}, we consider
\begin{align}\label{eqn:R2def}R_2\left(x_1,x_2;q\right)&:=\sum_{\substack{m_1>0\\ m_2\geq 0}} \frac{q^{\left(m_1+m_2\right)^2+m_1}}{\left(x_1q\right)_{m_1}\left(q/x_1\right)_{m_1}\left(x_2q^{m_1}\right)_{m_2+1}\left(q^{m_1}/x_2\right)_{m_2+1}}\\
\nonumber & = 
\frac{1}{\left(q\right)_{\infty}}\sum_{n=1}^{\infty} \frac{\left(-1\right)^{n-1}\left(1+q^n\right)\left(1-q^n\right)^2 q^{\frac{n}{2}\left(3n+1\right)}}{\left(1-x_1 q^n\right)\left(1-q^n/x_1\right)\left(1-x_2 q^n\right)\left(1-q^n/x_2\right)},
\end{align}
where the equality comes from Theorem 3 in \cite{Andrews1}.  By Theorem 10 of \cite{Andrews1}, $R\left(x_1,x_2;q\right)$ is the generating function for $2$-marked Durfee symbols with the exponent of $x_i$ counting the $i$-th rank.  
Hence $R\left(x,x^2;q\right)$ is the generating function for $2$-marked Durfee symbols with the exponent of $x$ counting the full rank.  In Corollary 8 of \cite{Andrews1}, Andrews concludes from \eqref{eqn:R2def} the relation 
\begin{equation}\label{eqn:R2expand}
R_2\left(x,x^2;q\right) = \frac{R\left(x;q\right)-R\left(x^2;q\right)}{\left(x-x^2\right)\left(1-x^{-3}\right)}
\end{equation}
whenever $x$ is not a third root of unity or zero.

Analogous to \eqref{eqn:gtrsdef}, we may now define the generating function
 \begin{equation}\label{eqn:ftrsdef}
 f_t\left(r,s\right)=f_t\left(r,s;q\right):=
 \sum_{n=0}^{\infty} 
 \Big( \NF_2 \left(r,t;n\right)  -\NF_2\left(s,t;n\right) \Big)q^n 
 = 
 \frac{1}{t}
  \sum_{j=1}^{t-1}
  \left(  \zeta_t^{rj}  -\zeta_t^{sj} \right)
 R_2 \left(\zeta_t^j,\zeta_t^{2j};q \right).
 \end{equation}
If $3\nmid t$, then, using \eqref{eqn:R2expand}, this simplifies as  
 \begin{equation}\label{eqn:ftrsdefmod3}
 f_t\left(r,s\right)=  
 -\frac{1}{t} \sum_{j=1}^{t-1}
 \frac{\zeta_t^{-rj} -\zeta_t^{-sj}}{\left( \zeta_t^{2j} - \zeta_t^{j} \right) \left( 1- \zeta_t^{-3j}\right)} 
 \left( 
 R \left(\zeta_t^j;q \right) - R \left( \zeta_t^{2j};q\right)
 \right).
 \end{equation}

We will make constant usage of the symmetries coming from conjugation, given by
\begin{equation}\label{eqn:conjugation}
g_t\left(-r,s\right)=g_t\left(-r,-s\right)=g_t\left(r,-s\right)=g_t\left(r,s\right)=-g_t\left(s,r\right)
\end{equation}
and, coming from  \eqref{eqn:frankconj} (or $x_i\to x_i^{-1}$ in \eqref{eqn:R2def}),
\begin{equation}\label{eqn:fullrankconjugation}
f_t\left(-r,-s\right)=f_t\left(-r,s\right)=f_t\left(r,s\right)=-f_t\left(s,r\right).
\end{equation}

Define further the generating function for difference of ranks in congruence classes by
\begin{equation}\label{eqn:gtrscongdef}
g_{t,d}\left(r,s\right)=g_{t,d}\left(r,s;q\right):=\sum_{n=0}^{\infty} \Big( N\left(r,t;tn+d\right) - N\left(s,t;tn+d\right) \Big) q^n,
\end{equation}
and the generating function for corresponding difference of full ranks accordingly by
\begin{equation}\label{eqn:ftrscongdef}
f_{t,d}\left(r,s\right)=f_{t,d}\left(r,s;q\right):=\sum_{n=0}^{\infty} \Big( \NF_2\left(r,t;tn+d\right) - \NF_2\left(s,t;tn+d\right) \Big) q^n.
\end{equation}

The purpose of this section will be to establish the following identity relating the difference of full ranks for adjacent congruence classes to differences of ranks.
\begin{proposition}\label{prop:GeneralSetupIdentity}
When $t$ is odd, we obtain the following equality
\begin{equation}\label{eqn:adjfinal}
f_t(r,r+1)=\frac{1}{t} \sum_{m=0}^{t-1} \left({t-1}-m\right) g_t\Big(-3m+r-1,\overline{2}(-3m+r-1)\Big) +\frac{3}{t} \delta_{3\mid t} f_3(r,r+1),
\end{equation}
where $\overline{2}=\frac{t+1}{2}$ denotes the multiplicative inverse of $2$ modulo $t$ and $\delta_{3\mid t}=1$ if $3$ divides $t$ and $0$ otherwise.
\end{proposition}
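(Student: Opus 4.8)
The plan is to start from the definition \eqref{eqn:ftrsdef} with $s=r+1$ and substitute the second expression in \eqref{eqn:R2def} for $R_2(\zeta_t^j,\zeta_t^{2j};q)$. Because $t$ is odd, $2$ is invertible modulo $t$, so as $j$ ranges over $1,\dots,t-1$ the exponents $2j$ also range over the nonzero residues; this lets us rewrite everything in terms of $R(\zeta_t^j;q)$ via \eqref{eqn:R2expand} when $3\nmid t$, or else carry along the separate $3\mid t$ contribution. Concretely, I would first handle the case $(t,3)=1$ using the clean formula \eqref{eqn:ftrsdefmod3}: here $R_2(\zeta_t^j,\zeta_t^{2j};q)$ collapses to a scalar multiple of $R(\zeta_t^j;q)-R(\zeta_t^{2j};q)$, and after re-indexing the $R(\zeta_t^{2j};q)$ sum by $j\mapsto \overline{2}j$ one obtains a single sum $\sum_{j=1}^{t-1} c_j R(\zeta_t^j;q)$ with explicit coefficients $c_j$ built from $(\zeta_t^{-rj}-\zeta_t^{-(r+1)j})/((\zeta_t^{2j}-\zeta_t^j)(1-\zeta_t^{-3j}))$.

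The heart of the argument is then a Fourier-analytic identity on $\Z/t\Z$: I need to show that this linear combination $\sum_j c_j R(\zeta_t^j;q)$ equals $\frac1t\sum_{m=0}^{t-1}(t-1-m)\,g_t(-3m+r-1,\overline{2}(-3m+r-1))$. Using \eqref{eqn:gtrsdef}, the right-hand side is itself a linear combination of the $R(\zeta_t^j;q)$, namely $\frac1{t^2}\sum_j\Big(\sum_{m=0}^{t-1}(t-1-m)(\zeta_t^{(-3m+r-1)j}-\zeta_t^{\overline 2(-3m+r-1)j})\Big)R(\zeta_t^j;q)$. So the whole proposition reduces to checking, for each fixed $j$, an equality between two explicit finite exponential sums in $\zeta_t^j$. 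The factor $(t-1-m)$ against $\zeta_t^{-3mj}$ is a weighted geometric-type sum; I expect it to evaluate to something with denominator $(1-\zeta_t^{-3j})$ or $(1-\zeta_t^{-3j})^2$, which is exactly the shape of the denominators appearing in $c_j$. The key simplification is the standard identity $\sum_{m=0}^{t-1}(t-1-m)z^m = \frac{t}{1-z} - \frac{1-z^t}{(1-z)^2}$, which for $z=\zeta_t^{-3j}$ (a primitive $t$-th root power since $(t,3)=1$) gives $z^t=1$ and hence $\sum_{m=0}^{t-1}(t-1-m)\zeta_t^{-3mj} = \frac{t}{1-\zeta_t^{-3j}}$. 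Matching the $\zeta_t^{rj}$-part and the $\zeta_t^{\overline 2 rj}$-part of both sides then becomes a routine verification; one also uses $\zeta_t^{\overline 2\cdot 2j}=\zeta_t^j$ and the conjugation symmetries \eqref{eqn:conjugation} to line up indices.

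Finally, for the case $3\mid t$ one cannot use \eqref{eqn:R2expand} at the third roots of unity, so I would split the sum in \eqref{eqn:ftrsdef} into the terms with $3\mid j$ and those with $3\nmid j$. On the terms with $3\nmid j$ the argument above applies verbatim and produces the main sum (the geometric sum identity still works because $\zeta_t^{-3j}\ne1$ exactly when $3\nmid j$). The terms with $3\mid j$, say $j=3j'$, assemble — after noting $\zeta_t^{3j'}=\zeta_{t/3}^{j'}$ and that $2\zeta_t^{j}$-type arguments become third roots of unity only when... actually here $\zeta_t^j$ runs over the $(t/3)$-th roots of unity — into $\frac3t f_3(r,r+1)$ by comparing with \eqref{eqn:ftrsdef} at modulus $3$; this is where the correction term $\frac3t\delta_{3\mid t}f_3(r,r+1)$ comes from, and one checks the residual main-sum contribution from these $j$ vanishes because the weight $(t-1-m)$ summed against $\zeta_t^{-3mj}=\zeta_{t/3}^{-mj'}$ over a full period of length $t/3$ telescopes away. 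The main obstacle is bookkeeping: keeping the re-indexing $j\mapsto\overline 2 j$, the shift $m\mapsto m$ versus the residue of $-3m+r-1$, and the two cases $3\mid t$ vs. $3\nmid t$ all consistent, rather than any single hard estimate — the whole statement is an exact identity among roots of unity and should follow from careful algebra once the geometric-sum evaluation is in hand.
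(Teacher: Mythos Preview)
Your approach is essentially the paper's: the key step is the identity $\sum_{m=0}^{t-1}(t-1-m)\zeta^m=\frac{t}{1-\zeta}$ for a nontrivial $t$-th root of unity $\zeta$, applied with $\zeta=\zeta_t^{-3j}$, together with the re-indexing $j\mapsto\overline 2\,j$ on the $R(\zeta_t^{2j};q)$ part to assemble everything into $g_t$'s. The $(t,3)=1$ case is correct as you describe it.

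Your treatment of the $3\mid t$ case, however, contains an error. The singular indices---those $j$ for which $\zeta_t^j$ is a cube root of unity, equivalently $\zeta_t^{-3j}=1$---are not the $j$ with $3\mid j$ but rather $j=t/3$ and $j=2t/3$, since $\zeta_t^{-3j}=1$ iff $t\mid 3j$ iff $(t/3)\mid j$. (For $t=15$, say, the bad indices are $5$ and $10$, not $3,6,9,12$.) It is the sum over these two values of $j$ in \eqref{eqn:ftrsdef} that equals $\frac3t\,f_3(r,r+1)$, because $\zeta_t^{t/3}=\zeta_3$ and $\zeta_t^{2t/3}=\zeta_3^2$. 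On the complementary set $j\notin\{t/3,2t/3\}$ the expansion \eqref{eqn:R2expand} and the geometric-sum evaluation go through as before. To restore the full range $1\le j\le t-1$ in the resulting expression, the paper observes that $R(\zeta_3;q)=R(\zeta_3^{-1};q)=R(\zeta_3^2;q)$ (rank symmetry under conjugation), so the $j=t/3,2t/3$ contributions to $R(\zeta_t^j;q)-R(\zeta_t^{2j};q)$ vanish identically and can be added back for free. Your ``telescoping over a full period of length $t/3$'' does not accomplish this and would need to be replaced by this vanishing argument.
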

\begin{proof}
Fix $t$ odd and a primitive $t$-th root of unity $\zeta_t$.  We will use the fact that for any $t$-th root of unity $\zeta$ one has
\begin{equation}\label{eqn:zetainv}
\frac{1}{1-\zeta}=\frac{1}{t}\sum_{m=0}^{t-1} \Big(t-1-m\Big) \zeta^m.
\end{equation}

Using \eqref{eqn:R2expand} together with \eqref{eqn:ftrsdefmod3}, \eqref{eqn:fullrankconjugation}, and \eqref{eqn:zetainv}, we now expand 
\begin{multline}
\label{eqn:adjexpand}
f_t(r,r+1)-\frac{3}{t}\delta_{3\mid t} f_3(r,r+1)=f_t(-r,-(r+1))-\frac{\delta_{3\mid t}}{t}\sum_{j=\frac{t}{3},\frac{2t}{3}} \left(\zeta_t^{rj} - \zeta_t^{(r+1)j}\right)R_2\left(\zeta_t^j,\zeta_t^{2j};q\right)\\
=\frac{1}{t^2}\sum_{\substack{1\leq j\leq t-1\\j\neq \frac{t}{3},\frac{2t}{3}}} \sum_{m=0}^{t-1} \Big(t-1-m\Big) \zeta_t^{\big(-3m+r-1\big)j}  \left(R\left(\zeta_t^{j};q\right)-R\left(\zeta_t^{2j};q\right)\right).
\end{multline}
However, for every $3\mid t$ and $m\in \Z$ one has that
\begin{equation}\label{eqn:3rdroot}
\sum_{j=\frac{t}{3},\frac{2t}{3}} \zeta_t^{\left(-3m + r-1\right)j}\left(R\left(\zeta_t^j;q\right)-R\left(\zeta_t^{2j};q\right)\right) =\sum_{j=1}^2 \zeta_3^{(r-1)j}\left(R\left(\zeta_3^j;q\right) - R\left(\zeta_3^{2j};q\right)\right)=0.
\end{equation}
Hence when $3\mid t$ we may add \eqref{eqn:3rdroot} to \eqref{eqn:adjexpand} without changing the sum.  We then reverse the order of summation and split the sum (completed to $1\leq j\leq t-1$) in \eqref{eqn:adjexpand} into two sums coming from $R\left(\zeta_t^{j};q\right)$ and $R\left(\zeta_t^{2j};q\right)$.  
Since the sum on $j$ only depends on $j$ modulo $t$, we may then make the change of variables $j\to \overline{2}j$ in the second sum to see by \eqref{eqn:gtrsdef} that \eqref{eqn:adjexpand} equals
\begin{equation}\label{eqn:adjrelprime3}
\frac{1}{t} \sum_{m=0}^{t-1} \Big(t-1-m\Big) g_t\left(-3m+r-1,\overline{2}\big(-3m+r-1\big)\right),
\end{equation}
completing the proof.
\end{proof}

When $r\equiv 1\pmod{3}$, we can use Proposition \ref{prop:GeneralSetupIdentity} to prove the following rather pleasant identity.
\begin{proposition}\label{prop:GeneralOneMod3Identity}
For every odd positive integer $t$ and every integer $1\leq r\leq 3t+1$ with $r\equiv 1\pmod{3}$, one has 
$$
f_t(r,r+1) = \sum_{m=1}^{\frac{r-1}{3}} g_t\left(3m,\overline{2}\cdot 3m\right).
$$
\end{proposition}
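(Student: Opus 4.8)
The strategy is to specialize Proposition~\ref{prop:GeneralSetupIdentity} to the case $r\equiv 1\pmod 3$, where the shift parameter $-3m+r-1$ in the argument of $g_t$ becomes a multiple of $3$ for every $m$, and then to collapse the weighted sum $\sum_m (t-1-m)\,g_t(\cdots)$ by exploiting the conjugation symmetries \eqref{eqn:conjugation}. Throughout I would use that $(t,3)=1$ is \emph{not} assumed here, but when $3\mid t$ the extra term $\frac{3}{t}\delta_{3\mid t}f_3(r,r+1)$ in \eqref{eqn:adjfinal} must be tracked; since $r\equiv 1\pmod 3$ forces $r\equiv 1\pmod 3$, one has $f_3(r,r+1)=f_3(1,2)$, which vanishes by Theorem~\ref{thm:FullRankIdentity}(1) (applied with $t=3$), so that correction term disappears and the identity is uniform in $t$.

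First I would write $r-1=3\ell$ with $0\le \ell\le t$ (the hypothesis $1\le r\le 3t+1$ gives exactly $0\le\ell\le t$), so that $-3m+r-1=3(\ell-m)$ and Proposition~\ref{prop:GeneralSetupIdentity} reads
$$
f_t(r,r+1)=\frac1t\sum_{m=0}^{t-1}(t-1-m)\,g_t\big(3(\ell-m),\overline 2\cdot 3(\ell-m)\big).
$$
Next, because the summand depends on $m$ only through $\ell-m\bmod t$, I would reindex by $k\equiv \ell-m\pmod t$ with $0\le k\le t-1$; the coefficient $t-1-m$ becomes, modulo the periodicity, a linear function of $k$ that wraps around, and one finds $f_t(r,r+1)=\frac1t\sum_{k=0}^{t-1}c_k\,g_t(3k,\overline 2\cdot 3k)$ for explicit integers $c_k$ summing appropriately. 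The key simplification is the antisymmetry $g_t(a,\overline 2 a)=-g_t(\overline 2 a,a)$ together with $g_t(-a,-b)=g_t(a,b)$: pairing the index $k$ with $t-k$ (equivalently $3k$ with $-3k$), and using $\overline 2\cdot(-3k)\equiv -\overline 2\cdot 3k$, one checks $g_t(3(t-k),\overline 2\cdot 3(t-k))=g_t(-3k,-\overline 2\cdot 3k)=g_t(3k,\overline 2\cdot 3k)$, so the terms are \emph{equal} under $k\mapsto t-k$, not opposite; hence the coefficients combine as $c_k+c_{t-k}$, and a short computation with the wrapped linear weights shows $c_k+c_{t-k}$ is constant in $k$ for $1\le k\le t-1$, while the $k=0$ term has $g_t(0,0)=0$ and drops out. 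Tracking which residues $3k$ actually arise as $k$ runs over $1,\dots,\tfrac{r-1}{3}=\ell$ versus the full range, and matching the constant against the claimed sum $\sum_{m=1}^{\ell}g_t(3m,\overline 2\cdot 3m)$, finishes the identification.

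The main obstacle I anticipate is the bookkeeping in the reindexing step: keeping careful track of how the linear weight $t-1-m$ transforms under $m\mapsto \ell-m\bmod t$ and then under the pairing $k\mapsto t-k$, so that the "wrap-around" contributions (the jump of size $t$ in the weight as $\ell-m$ crosses $0$) are accounted for correctly, and verifying that all the ad hoc constants cancel to leave precisely the clean truncated sum up to $\frac{r-1}{3}$. A secondary subtlety is ensuring the argument is genuinely uniform in the divisibility of $t$ by $3$: one must confirm that when $3\mid t$ the residues $3k\bmod t$ still range over all multiples of $3$ as needed and that the vanishing $f_3(1,2)=0$ is legitimately invoked. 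Once the combinatorial identity among the coefficients $c_k$ is pinned down, the rest is a direct substitution into \eqref{eqn:adjfinal}.
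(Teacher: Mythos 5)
Your overall plan coincides with the paper's: specialize Proposition \ref{prop:GeneralSetupIdentity}, discard the $\delta_{3\mid t}$ term via $f_3(r,r+1)=f_3(1,2)=0$, reindex so all arguments become multiples of $3$, pair $k$ with $t-k$ using \eqref{eqn:conjugation}, and use that $\sum_{k\bmod t}g_t\left(3k,\overline{2}\cdot 3k\right)=0$ (by $k\mapsto 2k$). However, the explicit claim at the heart of your sketch is false, and taken at face value it proves the wrong statement. Writing $\ell=\frac{r-1}{3}$ and $m\equiv\ell-k\pmod t$ with $0\le m\le t-1$, the wrapped weights are $c_k=(k-1-\ell)+t\cdot\mathbf{1}\left[k\le\ell\right]$ (for $0\le k\le t-1$, $\ell\le t-1$), whence
$$
c_k+c_{t-k}=\left(t-2-2\ell\right)+t\Big(\mathbf{1}\left[k\le\ell\right]+\mathbf{1}\left[k\ge t-\ell\right]\Big).
$$
This is \emph{not} constant on $1\le k\le t-1$: it jumps by $t$ exactly on $k\in\{1,\dots,\ell\}\cup\{t-\ell,\dots,t-1\}$. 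Were it constant, your pairing would give $f_t(r,r+1)=\frac{C}{2t}\sum_{k\bmod t}g_t\left(3k,\overline{2}\cdot 3k\right)=0$ for every $r\equiv 1\pmod 3$, contradicting, e.g., Corollary \ref{cor:General47Identity} combined with Theorem \ref{thm:RankDifferences}. The truncated sum in the proposition comes precisely from the non-constant piece: the constant part $t-2-2\ell$ contributes $0$, while the indicator part contributes $\frac12\sum_{k=1}^{\ell}g_t\left(3k,\overline{2}\cdot 3k\right)+\frac12\sum_{k=t-\ell}^{t-1}g_t\left(3k,\overline{2}\cdot 3k\right)=\sum_{k=1}^{\ell}g_t\left(3k,\overline{2}\cdot 3k\right)$ after one more application of $k\mapsto t-k$. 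You do flag the wrap-around jump of size $t$ as the anticipated obstacle, but then you assert it away; that jump is the entire content of the proof (it is exactly the second sum in the paper's \eqref{eqn:shiftbottom}), so the argument as written fails.

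Two smaller points. First, justifying $f_3(1,2)=0$ by Theorem \ref{thm:FullRankIdentity}~(1) with $t=3$ is circular, since that theorem is deduced \emph{from} this proposition; the correct justification is immediate from \eqref{eqn:fullrankconjugation}, namely $f_3(1,2)=f_3(-1,2)=f_3(2,2)=0$. Second, your range $0\le\ell\le t$ includes $\ell=t$ and values $\ell\ge t/2$, where the two indicator sets above overlap; the identity still comes out correctly there, but the bookkeeping you defer must cover this case as well.
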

\begin{remark}
We remark that the equality in Proposition \ref{prop:GeneralOneMod3Identity} holds true for any $r\in \N$ with $r \equiv 1\pmod{3}$, but we only prove the cases $1\leq r \leq 3t+1$ because these are sufficient for the purposes of this paper.
\end{remark}
\begin{proof}
We will denote $j:=\frac{r-1}{3}$.  Since $f_3(r,r+1)=f_3(1,2)=0$, making the change of variables $m\to m+j$ in Proposition \ref{prop:GeneralSetupIdentity} yields 
$$
f_t(r,r+1)=\frac{1}{t}\sum_{m=-j}^{t-1-j} \Big(t-1-(m+j)\Big) g_t\left(-3m,-\overline{2}\cdot 3m\right).
$$
Notice that if we let $\widetilde{m}:=m+t$, then, since $g_t(a,b)$ only depends on $a$ and $b$ modulo $t$,
$$
\Big(t-1-(m+j)\Big) g_t\left(-3m,-\overline{2}\cdot 3m\right)  =  t \cdot g_t\left(-3\widetilde{m},-\overline{2}\cdot 3\widetilde{m}\right) + \Big(t-1-\left(\widetilde{m}+j\right)\Big) \cdot g_t\left(-3\widetilde{m},-\overline{2}\cdot3\widetilde{m}\right).
$$
Using this fact, we make the change of variables $m\to t-m$ for $-j\leq m\leq -1$ to obtain
\begin{equation}\label{eqn:shiftbottom}
f_t\left(r,r+1\right)=\frac{1}{t} \sum_{m=0}^{t-1} \Big(t-1-(m+j)\Big) g_t\left(-3m,-\overline{2}\cdot 3m\right) + \sum_{m=t-j}^{t-1} g_t\left(-3m,-\overline{2}\cdot 3m\right).
\end{equation}
We then write the first sum twice and for $m\neq 0$ we group the $m$ and $t-m$ terms together.  Using the symmetry \eqref{eqn:conjugation} with the change of variables $m\to t-m$ and $g_t\left(0,0\right)=0$, the first sum becomes
$$
\frac{t-2-2j}{2t}\sum_{m\pmod{t}}g_t\left(3m,\overline{2}\cdot 3m\right)=0.
$$
The fact that this is zero follows by splitting into two sums using $g_t(r,s)=g_t(r)-g_t(s)$ and then making the change of variables $m\to 2m$ in the second sum.  The result of the proposition then follows by making the change of variables $m\to t-m$ in the second sum of \eqref{eqn:shiftbottom}.
\end{proof}
One immediately obtains the following simple identities as a corollary.
\begin{corollary}\label{cor:General47Identity}
For every odd positive integer $t$ one has 
$$
f_t(4,5) = g_t\left(3,\frac{t-3}{2}\right) \qquad\text{ and }\qquad f_t(7,8)= g_t\left(6,\frac{t-3}{2}\right).
$$
\end{corollary}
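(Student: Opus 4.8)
The plan is to obtain Corollary \ref{cor:General47Identity} as an immediate specialization of Proposition \ref{prop:GeneralOneMod3Identity}. For $f_t(4,5)$ we take $r=4$, which satisfies $r\equiv 1\pmod 3$ and $1\le r\le 3t+1$ for every $t\ge 1$, so $\frac{r-1}{3}=1$ and Proposition \ref{prop:GeneralOneMod3Identity} gives $f_t(4,5)=g_t\bigl(3,\overline{2}\cdot 3\bigr)$. It then remains only to identify the second argument: since $\overline{2}=\frac{t+1}{2}$, we have $\overline{2}\cdot 3=\frac{3(t+1)}{2}\equiv \frac{3(t+1)}{2}-t=\frac{t+3}{2}\pmod t$, and by the conjugation symmetry \eqref{eqn:conjugation} (specifically $g_t(a,b)=g_t(a,-b)$, or equivalently $N(b,t;n)=N(t-b,t;n)$ from \eqref{eqn:conjugateidentity}) we may replace $\frac{t+3}{2}$ by $t-\frac{t+3}{2}=\frac{t-3}{2}$, yielding $f_t(4,5)=g_t\bigl(3,\frac{t-3}{2}\bigr)$.

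For $f_t(7,8)$ the argument is identical with $r=7$, again satisfying $r\equiv 1\pmod 3$ and $1\le r\le 3t+1$ (here one should note this forces $t\ge 2$, which is automatic since $t$ is odd and the statement is trivial or vacuous for $t=1$; in any case $3t+1\ge 7$ for all $t\ge 2$). Now $\frac{r-1}{3}=2$, so Proposition \ref{prop:GeneralOneMod3Identity} gives $f_t(7,8)=g_t\bigl(3,\overline{2}\cdot 3\bigr)+g_t\bigl(6,\overline{2}\cdot 6\bigr)$. The first summand is $g_t\bigl(3,\frac{t-3}{2}\bigr)=g_t(3)-g_t\bigl(\frac{t-3}{2}\bigr)$ by the computation above, written using $g_t(a,b)=g_t(a)-g_t(b)$. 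For the second summand, $\overline{2}\cdot 6=3\equiv 3\pmod t$, so $g_t\bigl(6,\overline{2}\cdot 6\bigr)=g_t(6,3)=g_t(6)-g_t(3)$. Adding, the $g_t(3)$ terms cancel and we are left with $g_t(6)-g_t\bigl(\frac{t-3}{2}\bigr)=g_t\bigl(6,\frac{t-3}{2}\bigr)$, as claimed.

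Since the entire content is bookkeeping with roots of unity and the conjugation symmetry, there is no genuine obstacle; the only point requiring a moment's care is the telescoping in the $r=7$ case, where one must correctly reduce $\overline{2}\cdot 6=3\cdot(t+1)\equiv 3\pmod t$ (because $6\cdot\overline{2}\equiv 3$ is just $2\mid 6$ scaled by the inverse of $2$) so that the middle term $g_t(6,3)$ appears and cancels against the $g_t(3)$ coming from the first term. I would present this as a short paragraph rather than a displayed computation, since each step is a one-line application of an already-established identity.

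\begin{proof}
Both equalities follow from Proposition \ref{prop:GeneralOneMod3Identity}. Taking $r=4$ (so $r\equiv 1\pmod 3$ and $\frac{r-1}{3}=1$) gives $f_t(4,5)=g_t\bigl(3,\overline{2}\cdot 3\bigr)$. Since $\overline{2}=\frac{t+1}{2}$, we have $\overline{2}\cdot 3\equiv\frac{t+3}{2}\pmod t$, and \eqref{eqn:conjugation} allows us to replace $\frac{t+3}{2}$ by $-\frac{t+3}{2}\equiv\frac{t-3}{2}\pmod t$; hence $f_t(4,5)=g_t\bigl(3,\frac{t-3}{2}\bigr)$.

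Taking $r=7$ (so $r\equiv 1\pmod 3$ and $\frac{r-1}{3}=2$) gives
$$
f_t(7,8)=g_t\bigl(3,\overline{2}\cdot 3\bigr)+g_t\bigl(6,\overline{2}\cdot 6\bigr).
$$
As above, $g_t\bigl(3,\overline{2}\cdot 3\bigr)=g_t\bigl(3,\tfrac{t-3}{2}\bigr)=g_t(3)-g_t\bigl(\tfrac{t-3}{2}\bigr)$. Moreover $\overline{2}\cdot 6\equiv 3\pmod t$, so $g_t\bigl(6,\overline{2}\cdot 6\bigr)=g_t(6,3)=g_t(6)-g_t(3)$. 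Adding these and using $g_t\bigl(6,\tfrac{t-3}{2}\bigr)=g_t(6)-g_t\bigl(\tfrac{t-3}{2}\bigr)$, the terms $g_t(3)$ cancel and we obtain $f_t(7,8)=g_t\bigl(6,\frac{t-3}{2}\bigr)$.
\end{proof}
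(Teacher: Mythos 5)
Your proof is correct and is exactly the argument the paper intends: the corollary is obtained by specializing Proposition \ref{prop:GeneralOneMod3Identity} to $r=4$ and $r=7$, reducing $\overline{2}\cdot 3\equiv\frac{t+3}{2}$ and $\overline{2}\cdot 6\equiv 3\pmod t$, and applying the conjugation symmetry \eqref{eqn:conjugation}, with the $g_t(3)$ terms telescoping in the $r=7$ case. The bookkeeping (including the edge case $t=1$) checks out.
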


For later usage in the proof of Theorem \ref{thm:GeneralFullRankInequality}, we now rewrite Proposition \ref{prop:GeneralOneMod3Identity} in a form which will prove beneficial for showing inequalities.
\begin{lemma}\label{lem:1mod3}
Suppose that $t$ is odd and $1< r\leq 3t+1$ satisfies $r\equiv 1\pmod{3}$.  Then we have 
\begin{equation}\label{eqn:rmod1arbitrary}
f_t(r,r+1)= \sum_{m=0}^{\left\lceil \frac{r-1}{6}\right\rceil -1} g_t\left(r-1 -3m,\frac{ t-3}{2} - 3m\right).
\end{equation}
\end{lemma}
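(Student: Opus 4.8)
The plan is to start from the clean identity in Proposition \ref{prop:GeneralOneMod3Identity}, namely
\[
f_t(r,r+1) = \sum_{m=1}^{\frac{r-1}{3}} g_t\!\left(3m,\overline{2}\cdot 3m\right),
\]
and then pair up the terms of this sum using the conjugation symmetry \eqref{eqn:conjugation}. The key observation is that $g_t(3m,\overline{2}\cdot 3m)$ and $g_t(3(\ell),\overline{2}\cdot 3(\ell))$ for an appropriate partner index $\ell$ combine into a single term of the shape appearing in \eqref{eqn:rmod1arbitrary}. Writing $\overline{2}=\frac{t+1}{2}$ and noting $\overline{2}\cdot 3m\equiv \frac{3m+3mt}{2}\equiv \frac{3m}{2}\pmod t$ when $m$ is even, and $\equiv \frac{3m+t}{2}\pmod t$ when $m$ is odd, one should check that the set $\{3m:1\le m\le \frac{r-1}{3}\}$ reorganizes so that each summand can be rewritten with second argument of the form $\frac{t-3}{2}-3m'$.

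Concretely, I would first use $g_t(a,b)=g_t(a)-g_t(b)$ and the symmetry $g_t(c)=g_t(t-c)$ (a consequence of \eqref{eqn:conjugateidentity}/\eqref{eqn:conjugation}) to normalize every argument into a convenient residue range. The target right-hand side has $\lceil (r-1)/6\rceil$ terms, which is half of the $\frac{r-1}{3}$ terms on the left (up to the ceiling coming from parity of $\frac{r-1}{3}$), so the combinatorial content is exactly a two-to-one folding: the terms indexed by $m$ and by $\frac{r-1}{3}+1-m$ (or a similar involution on the index set) should add, via \eqref{eqn:conjugation} applied in the form $g_t(a,b)=-g_t(b,a)$ and $g_t(a,b)=g_t(-a,-b)$, to $g_t\!\left(r-1-3m,\frac{t-3}{2}-3m\right)$. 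I would verify this by a direct residue computation: show that for the paired indices, $\{3m,\overline{2}\cdot 3m\}$ and the partner set together are congruent mod $t$ to $\{r-1-3m,\frac{t-3}{2}-3m\}$ after applying negation where needed, handling separately the possible leftover middle term when $\frac{r-1}{3}$ is odd (here the ceiling in the upper limit does the bookkeeping, and one checks that such a self-paired term already has the required form because $\overline{2}\cdot 3m\equiv \frac{t-3}{2}-3m$ or its negative).

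The main obstacle I anticipate is the careful index bookkeeping: tracking residues mod $t$ through the change of variables $j\mapsto\overline{2}j$, keeping the coefficients straight, and correctly treating the parity of $\frac{r-1}{3}$ so that the ceiling $\left\lceil\frac{r-1}{6}\right\rceil$ emerges with the right number of terms and no term is double-counted or dropped. The inequality $1<r\le 3t+1$ is used precisely to ensure the residues $r-1-3m$ and $\frac{t-3}{2}-3m$ stay in a single period so that no wrap-around corrections are needed and Proposition \ref{prop:GeneralOneMod3Identity} applies; I would note at the start where this hypothesis enters. Once the pairing is set up correctly, the verification of each identified equality of $g_t$-terms is a routine application of \eqref{eqn:conjugation}, so the whole argument reduces to making the reindexing explicit and checking the endpoint cases.
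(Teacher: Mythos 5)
Your starting point (Proposition \ref{prop:GeneralOneMod3Identity}) and your toolkit (splitting $g_t(a,b)=g_t(a)-g_t(b)$, the symmetry $g_t(c)=g_t(-c)$ from \eqref{eqn:conjugation}, and reindexing) are exactly what the paper's proof uses, and your residue computation $\overline{2}\cdot 3m\equiv\frac{3m+t}{2}\pmod t$ for $m$ odd is the right way to produce the second arguments $\frac{t-3}{2}-3\ell$. The gap is in the central mechanism you commit to: pairing the whole difference terms indexed by $m$ and by $\frac{r-1}{3}+1-m$ so that two terms $g_t(3m,\overline{2}\cdot 3m)$ combine into a single term of the target shape. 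This cannot work, and no involution on the index set will fix it. For $g_t(a)-g_t(b)+g_t(c)-g_t(d)$ to collapse to a single difference you need an internal cancellation such as $g_t(b)=g_t(c)$, i.e.\ $\overline{2}\cdot 3m\equiv\pm 3m'\pmod t$, i.e.\ $m'\equiv\pm 2m\pmod t$; the relevant map $m\mapsto 2m$ is not an involution and bears no relation to $m\mapsto j+1-m$. Likewise there is no ``self-paired middle term'' to handle: the ceiling arises from counting odd indices, not from a fixed point of a pairing.

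The cancellation that actually occurs is non-involutive. Writing $r=1+3j$ and splitting the sum from Proposition \ref{prop:GeneralOneMod3Identity} into $\sum_{m=1}^{j}g_t(3m)-\sum_{m=1}^{j}g_t\left(\overline{2}\cdot 3m\right)$, one observes that for $m=2\ell$ even, $\overline{2}\cdot 3m=3\ell\cdot\left(2\overline{2}\right)\equiv 3\ell\pmod t$, so the \emph{negative} half of the term at index $2\ell$ cancels the \emph{positive} half of the term at index $\ell$, for each $1\le\ell\le\left\lfloor j/2\right\rfloor$. What survives is $\sum_{m=\left\lfloor j/2\right\rfloor+1}^{j}g_t(3m)-\sum_{1\le m\le j,\ m\text{ odd}}g_t\left(\overline{2}\cdot 3m\right)$; setting $m=2\ell+1$ and using $g_t(c)=g_t(-c)$ in the second sum, and shifting $m\to j-m$ in the first, both sums have $\left\lceil j/2\right\rceil=\left\lceil\frac{r-1}{6}\right\rceil$ terms and recombine termwise into \eqref{eqn:rmod1arbitrary}. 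So your overall strategy and endpoint bookkeeping are sound, but the key identity $\overline{2}\cdot 3\cdot(2\ell)\equiv 3\ell$ driving the cancellation is missing from your plan, and the pairing you propose in its place would fail the ``direct residue computation'' you defer.
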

\begin{proof}
We write $r=1+3j$.  By Proposition \ref{prop:GeneralOneMod3Identity}, we have
\begin{equation}\label{eqn:tosplit}
f_{t}(r,r+1) = \sum_{m=1}^{j} g_t\left(3m,\overline{2}\cdot 3m\right).
\end{equation}
We now split $g_t(a,b)=g_t(a)-g_t(b)$ and break \eqref{eqn:tosplit} into two sums.  The terms with $m$ even from the second sum then cancel the first $\left\lfloor\frac{j}{2}\right\rfloor$ terms from the first sum.  Hence
\begin{equation}\label{eqn:oddsleft}
f_t(r,r+1) = \sum_{m=\left\lfloor \frac{j}{2}\right\rfloor +1 }^{j} g_t\left(3m\right) - \sum_{\substack{1\leq m\leq j\\ m\text{ odd}}} g_t\left(\overline{2}\cdot 3m\right).
\end{equation}
Writing $m=2\ell+1$ in the second sum and using the symmetry \eqref{eqn:conjugation} while making the shift $m\to j-m$ in the first sum of \eqref{eqn:oddsleft} and recombining with $g_t(a,b)=g_t(a)-g_t(b)$ yields \eqref{eqn:rmod1arbitrary}.
\end{proof}

\section{Relations between $\NF_2(1,t;n)$ and $\NF_2(2,t;n)$}\label{section:FullRankIdentity}
In this section we will prove Theorem \ref{thm:FullRankIdentity} for the difference of full rank generating functions $f_t(1,2)$.  For $t$ odd, Proposition \ref{prop:GeneralOneMod3Identity} immediately implies Theorem \ref{thm:FullRankIdentity} (1), since the sum in the proposition is empty.   Theorem \ref{thm:FullRankIdentity} (3) now follows immediately from Theorem \ref{thm:FullRankIdentity} (1) by taking $t$ odd with $t\to \infty$ for each $n$ fixed.

In order to prove Theorem \ref{thm:FullRankIdentity} (2), we will show a relationship between $f_t(1,2)$ and $g_t\left(\frac{t}{2}\right)$, defined in \eqref{eqn:gtdef}, in the case when $t$ is even.
\begin{proposition} \label{prop:FullRankEvenEquality}
For every even positive integer $t$, we have
$$
f_t(1,2) = -\frac{1}{2}g_t\left(\frac{t}{2}\right).
$$
\end{proposition}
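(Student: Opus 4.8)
The plan is to mimic the structure of Proposition~\ref{prop:GeneralSetupIdentity}, but to track the full rank difference $f_t(1,2)$ directly, exploiting the extra symmetry available when $t$ is even. Starting from \eqref{eqn:ftrsdef}, $f_t(1,2) = \frac{1}{t}\sum_{j=1}^{t-1}(\zeta_t^j - \zeta_t^{2j}) R_2(\zeta_t^j,\zeta_t^{2j};q)$, and since $3\nmid t$ cannot be assumed here, one should be careful; however, for $t$ even the index $j = \frac{t}{2}$ plays a special role because $\zeta_t^{t/2} = -1$, so $\zeta_t^j - \zeta_t^{2j}$ at $j=t/2$ equals $-1-1 = -2$, and $R_2(-1,1;q)$ appears with a clean coefficient. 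First I would isolate this $j = t/2$ term and show that it contributes (up to the factor $-\frac{1}{2}$) exactly $g_t\big(\frac{t}{2}\big)$, after relating $R_2(-1,1;q)$ to $R(-1;q) = \sum_n \big(N(0,2;n)-N(1,2;n)\big)q^n$ via a limiting case of \eqref{eqn:R2expand} or directly from the Hecke-type series in \eqref{eqn:R2def}.

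The key computational step is then to show that \emph{all the remaining terms cancel}. For $j \neq t/2$, I would pair the index $j$ with $t-j$ (note $\zeta_t^{t-j} = \overline{\zeta_t^j}$) and use the conjugation symmetry $R_2(x_1,x_2;q) = R_2(x_1^{-1},x_2^{-1};q)$ recorded in \eqref{eqn:fullrankconjugation} (equivalently $x_i \to x_i^{-1}$ in \eqref{eqn:R2def}). Under $j \to t-j$, the argument $R_2(\zeta_t^j,\zeta_t^{2j};q)$ is unchanged, while the coefficient $\zeta_t^j - \zeta_t^{2j}$ becomes $\zeta_t^{-j} - \zeta_t^{-2j}$; summing the two gives $(\zeta_t^j + \zeta_t^{-j}) - (\zeta_t^{2j} + \zeta_t^{-2j}) = 2\cos(2\pi j/t) - 2\cos(4\pi j/t)$, which is not identically zero, so a naive pairing does not immediately kill the sum. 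Instead I expect the right move is to use the finer identity \eqref{eqn:R2expand} to write $R_2(\zeta_t^j,\zeta_t^{2j};q)$ in terms of $R(\zeta_t^j;q) - R(\zeta_t^{2j};q)$ for $3\nmid j$ (so this is legitimate for all $j$ when $3\nmid t$, and otherwise the $3\mid j$ terms must be handled as in \eqref{eqn:3rdroot}), then reindex via $j \to \overline{2}j$ in the $R(\zeta_t^{2j};q)$ sum exactly as in the proof of Proposition~\ref{prop:GeneralSetupIdentity}. The resulting expression is a combination of $g_t(a,b)$'s whose coefficients, after applying the symmetry \eqref{eqn:conjugation} $g_t(a,b) = -g_t(b,a)$ and pairing $m$ with suitable shifts, should telescope to leave only the $j = t/2$ contribution.

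The main obstacle is the denominator $(x - x^2)(1 - x^{-3})$ in \eqref{eqn:R2expand}: at $x = -1$ this equals $(-1 - 1)(1 - (-1)) = (-2)(2) = -4$, which is harmless, but for general even $t$ one must carefully evaluate $\frac{\zeta_t^j - \zeta_t^{2j}}{(\zeta_t^j - \zeta_t^{2j})(1 - \zeta_t^{-3j})} = \frac{1}{1 - \zeta_t^{-3j}}$ and then expand $\frac{1}{1-\zeta_t^{-3j}}$ using \eqref{eqn:zetainv}. The bookkeeping of which $g_t(r)$ terms survive — and checking that the surviving coefficient of $g_t(t/2)$ is precisely $-\frac{1}{2}$ and everything else cancels — is where the real work lies; I anticipate this mirrors the cancellation in Proposition~\ref{prop:GeneralOneMod3Identity} closely but with the role of the empty sum replaced by the single non-vanishing $2$-torsion term. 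An alternative, possibly cleaner route: combine Proposition~\ref{prop:GeneralSetupIdentity}'s method with the observation that $\overline{2}$ does not exist mod $t$ when $t$ is even, forcing one to split the $R(\zeta_t^{2j};q)$ sum over the image of $j \mapsto 2j$, which has index $2$ and omits exactly the odd residues — this structural asymmetry is precisely what produces the lone $g_t(t/2)$ term and the factor $\tfrac12$.
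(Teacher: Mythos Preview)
Your proposal is a sketch rather than a proof, and it misses the one idea that makes this proposition short. You actually compute the decisive quantity yourself and then discard it: after pairing $j$ with $t-j$ you observe that the coefficient of $R_2(\zeta_t^j,\zeta_t^{2j};q)$ becomes
\[
(\zeta_t^j+\zeta_t^{-j})-(\zeta_t^{2j}+\zeta_t^{-2j}),
\]
and you conclude only that ``a naive pairing does not immediately kill the sum.'' But this is exactly the factor $x+x^{-1}-y-y^{-1}$ with $x=\zeta_t^j$, $y=\zeta_t^{2j}$, and a partial-fraction decomposition of the summand in \eqref{eqn:R2def} gives the clean identity
\[
\bigl(x-y+x^{-1}-y^{-1}\bigr)\,R_2(x,y;q)=G(x;q)-G(y;q),\qquad G(x;q):=R(x;q)-1.
\]
So the pairing does not kill the sum --- it \emph{linearises} it. One line later,
\[
f_t(1,2)=\frac{1}{2t}\sum_{j=0}^{t-1}\Bigl(G(\zeta_t^j;q)-G(\zeta_t^{2j};q)\Bigr)
=\frac{1}{2t}\Bigl(\sum_{j=0}^{t-1}R(\zeta_t^j;q)-2\sum_{j=0}^{t/2-1}R(\zeta_{t/2}^j;q)\Bigr)
=\tfrac12\bigl(g_t(0)-g_{t/2}(0)\bigr)=-\tfrac12 g_t\!\left(\tfrac t2\right),
\]
where the second equality is precisely your index-$2$ observation about the image of $j\mapsto 2j$, and the subtraction $G=R-1$ is what lets you insert the $j=0$ term harmlessly.

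By contrast, your main plan --- push everything through \eqref{eqn:R2expand} and \eqref{eqn:zetainv}, then hope the resulting $g_t$'s telescope --- runs into real trouble: the reindexing $j\to\overline{2}j$ that drives Proposition~\ref{prop:GeneralSetupIdentity} is unavailable for even $t$, and the surviving expression $\frac{1}{t}\sum_m(t-1-m)\bigl[g_t(3m)-\frac{1}{t}\sum_j\zeta_t^{-3jm}R(\zeta_t^{2j};q)\bigr]$ does not reduce to a single $j=t/2$ contribution as you predict. The answer is not localised at $j=t/2$; it comes from \emph{all} $j$ via the $2$-to-$1$ collapse of the $G(\zeta_t^{2j})$ sum. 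Your final paragraph senses this, but the detour through $\tfrac{1}{1-\zeta_t^{-3j}}$ is the wrong vehicle: the identity above bypasses that denominator entirely and works uniformly for all even $t$, including $6\mid t$.
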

\begin{proof}
Define
\begin{equation}\label{eqn:gdef}
G\left(x;q\right):= \frac{1}{(q)_{\infty}}\sum_{n=1}^{\infty} \frac{(-1)^{n-1}\left(1+q^n\right)\left(1-q^n\right)^2q^{\frac{n}{2}(3n-1)}} {\left(1-x q^{n}\right)\left(1-q^{n}/x\right)} .
\end{equation}
Using Euler's pentagonal number theorem (for example, see \cite{AndrewsPartitions}, Chapter 1), namely
$$
\left(q\right)_{\infty}= 1+ \sum_{n=1}^{\infty} \left(-1\right)^n q^{ \frac{n}{2}\left(3n-1\right)}\left( 1+q^n\right),
$$
along with the last equality of \eqref{eqn:Rrank}, for every $x\in \C$ we obtain 
\begin{equation}\label{eqn:geqn}
R\left(x;q\right)=1+G\left(x;q\right).
\end{equation}
Decomposing the summand of \eqref{eqn:R2def} into partial fractions, we obtain that
$$
 \Big(x-y +x^{-1}-y^{-1}\Big)R_2\left(x,y;q\right)=G\left(x;q\right)-G\left(y;q\right).
$$
Pairing the $j$ and $t-j$ terms in the sum in the last equality of the definition \eqref{eqn:ftrsdef} of $f_t(r,s)$ and then using the symmetry of $R_2$ along with \eqref{eqn:geqn}, we obtain
\begin{align*}
f_t(1,2)&=\frac{1}{2t}\sum_{j=1}^{t-1} \left(\zeta_t^{-j}-\zeta_t^{-2j}+\zeta_t^{j}-\zeta_t^{2j}\right)R_2\left(\zeta_t^j,\zeta_t^{2j};q\right)= \frac{1}{2t}\left(\sum_{j=0}^{t-1} G\left(\zeta_t^{j};q\right) -\sum_{j=0}^{t-1} G\left(\zeta_t^{2j};q\right)\right)\\
&
=\frac{1}{2t}\left({\sum_{j=0}^{t-1}} R\left(\zeta_t^{j};q\right) - 2\sum_{j=0}^{\frac{t}{2}-1}R\left(\zeta_{\frac{t}{2}}^j;q\right)\right)= \frac{1}{2}\left(g_t(0) - g_{\frac{t}{2}}(0)\right)=-\frac{1}{2} g_{t}\left(\frac{t}{2}\right).
\end{align*}
\end{proof}
We are now ready to move on to the proof of Theorem \ref{thm:FullRankIdentity} (2).
\begin{proof}[Proof of Theorem \ref{thm:FullRankIdentity} (2)]

Given Proposition \ref{prop:FullRankEvenEquality}, it is clear that for $t$ even
\begin{equation}\label{eqn:N2evenrelate}
\NF_2(1,t;n)-\NF_2(2,t;n)=-\frac{1}{2}N\left(\frac{t}{2},t;n\right)\leq 0.
\end{equation}
with equality if and only if there are no partitions of $n$ with rank congruent to $\frac{t}{2}$ modulo $t$.  Whenever $n\leq \frac{t}{2}$ it is clear by \eqref{eqn:N2evenrelate} that 
$$
\NF_2(1,t;n)=\NF_2(2,t;n).
$$
Now assume that $n\geq \frac{t}{2}+1$ with $n\neq \frac{t}{2}+2$.  If $n=2m-1 + \frac{t}{2}$ for some $m\in \N$, then the partition $\left(m+\frac{t}{2}, 1,\dots,1\right)$ with precisely $m-1$ parts of size $1$, has rank equal to $\frac{t}{2}$.  If $n=2m+\frac{t}{2}$ with $2\leq m\in \N$, then the partition $\left(m+\frac{t}{2},2,1,\dots,1\right)$ with precisely $m-2$ parts of size $1$, has rank equal to $\frac{t}{2}$.  Since the only partition of $\frac{t}{2}+2$ with rank at least $\frac{t}{2}$ is $\left(\frac{t}{2}+2\right)$, we find that there are no partitions of size $\frac{t}{2}+2$ with rank equal to $\frac{t}{2}$ or $-\frac{t}{2}$.  Thus the set of $n$ with $\NF_2(1,t;n)=\NF_2(2,t;n)$ is precisely $\left\{ 0,\dots,\frac{t}{2},\frac{t}{2}+2 \right\}$. 

\end{proof}

\section{Atkin and Swinnerton-Dyer type Identities}\label{section:ASD-identities}
Using Proposition \ref{prop:GeneralSetupIdentity}, we are able to determine some infinite product and related identities by using the results of Atkin and Swinnerton-Dyer \cite{AtkinSwinnertonDyer1} for $t=5$ and $t=7$.

We first let $t=5$.  The identities which we will obtain in Theorem \ref{thm:IdentityTheorem5} (1) were proven in Theorem 17 of Andrews \cite{Andrews1}.  The remaining identities in Theorem \ref{thm:IdentityTheorem5} were proven (with a different method) by Keith \cite{Keith}.  However, we include this case for completeness as well as to exhibit this method of constructing identities.  Theorem \ref{thm:FullRankIdentity} (1) and the conjugation symmetry from \eqref{eqn:fullrankconjugation} immediately implies, as previously shown in Theorem 17 of Andrews \cite{Andrews1}, that 
\begin{equation}\label{eqn:5rsneq0}
f_5\left(r,s\right)=0.
\end{equation}
unless $r=0$ or $s=0$.  Thus the only remaining cases are $0=r<s$.  From Corollary \ref{cor:General47Identity} and the symmetry \eqref{eqn:conjugation}, we have
\begin{equation}\label{eqn:Identity5}
f_5(0,s) = g_5(1,2).
\end{equation}
Combining \eqref{eqn:Identity5} with Theorem 4 of Atkin and Swinnerton-Dyer implies the following result.
\begin{theorem}\label{thm:IdentityTheorem5}
The following equalities hold for $t=5$.
\begin{enumerate}
\item For every $1\leq s\leq 4$ we have
$$
f_{5,1}(0,s)=f_{5,4}(0,s)=0.
$$
\item For every $1\leq s\leq 4$ we have
$$
f_{5,2}(0,s) = \frac{\left(q^5;q^5\right)_{\infty}} { \left(q^2;q^5\right)_{\infty} \left(q^3;q^5\right)_{\infty}}.
$$
\item For every $1\leq s\leq 4$ we have
$$
f_{5,0}(0,s)
= \frac{q}{\left( q^5;q^5 \right)_{\infty}}
\sum_{n \in \Z} 
\frac{\left(-1\right)^n q^{  \frac{15n}{2} \left(n+1\right) }}{1-q^{5n+1}}.
$$
\end{enumerate}
\end{theorem}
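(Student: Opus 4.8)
The plan is to reduce the theorem to the classical rank of partitions and then to quote the mod-$5$ dissection computed by Atkin and Swinnerton-Dyer \cite{AtkinSwinnertonDyer1}. By \eqref{eqn:Identity5} we have the equality of formal power series $f_5(0,s;q)=g_5(1,2;q)$ for every $1\leq s\leq 4$, so comparing the coefficients of $q^{5n+d}$ on the two sides yields, for all $n\geq 0$,
$$
\NF_2(0,5;5n+d)-\NF_2(s,5;5n+d)=N(1,5;5n+d)-N(2,5;5n+d);
$$
that is, in the notation of \eqref{eqn:ftrscongdef} and \eqref{eqn:gtrscongdef}, $f_{5,d}(0,s)=g_{5,d}(1,2)$, independently of $s$. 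Thus parts (1), (2), (3) are precisely the evaluations of the three dissection pieces $g_{5,d}(1,2)$ for $d\in\{1,4\}$, for $d=2$, and for $d=0$.

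I would then invoke Theorem~4 of \cite{AtkinSwinnertonDyer1}. Starting from $g_5(1,2;q)=\frac{1}{5}\sum_{j=1}^{4}(\zeta_5^{j}-\zeta_5^{2j})R(\zeta_5^{j};q)$ as in \eqref{eqn:gtrsdef} with $t=5$, and using the conjugation relations $N(1,5;n)=N(4,5;n)$ and $N(2,5;n)=N(3,5;n)$ from \eqref{eqn:conjugateidentity} to collapse the four rank residue classes to the independent pair appearing in the Atkin--Swinnerton-Dyer formulas, their mod-$5$ dissection gives $N(1,5;5n+1)=N(2,5;5n+1)$ and $N(1,5;5n+4)=N(2,5;5n+4)$ for all $n$, whence $g_{5,1}(1,2)=g_{5,4}(1,2)=0$ (the second of these is also immediate from \eqref{eqn:mod5}); this is part (1). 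The same source supplies the infinite-product evaluation of $g_{5,2}(1,2)$, which is part (2), and the Lambert-type evaluation of $g_{5,0}(1,2)$, which is part (3). Equivalently, one may extract these from the Atkin--Swinnerton-Dyer evaluations of $g_5(0,1)$ and $g_5(0,2)$ via $g_5(1,2)=g_5(0,2)-g_5(0,1)$.

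The step that requires care, and hence the main obstacle, is matching normalizations: Atkin and Swinnerton-Dyer present their dissections with a fixed choice of primitive fifth root of unity, with specific signs in their theta and Lambert series, and for specific linear combinations of the $N(m,5;\cdot)$ (possibly written as series in $q^{1/5}$ rather than $q$). One must check that transcribing into $\frac{1}{5}\sum_{j=1}^{4}(\zeta_5^{j}-\zeta_5^{2j})R(\zeta_5^{j};q)$ introduces no spurious or missing power of $q$ — the leading factor $q$ in part (3) corresponds to the fact that $N(1,5;5n)-N(2,5;5n)$ first differs at $n=1$ — and that the theta quotient one obtains is literally $(q^5;q^5)_{\infty}/\big((q^2;q^5)_{\infty}(q^3;q^5)_{\infty}\big)$, which follows from the Jacobi triple product identity. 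This is purely formal bookkeeping and, unlike the inequality results elsewhere in the paper, requires no estimation of Fourier coefficients.

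Finally, I would note for completeness — as indicated in the text preceding the theorem — that part (1) combined with \eqref{eqn:5rsneq0} recovers Theorem~17 of Andrews \cite{Andrews1}, while all three parts were obtained independently by Keith \cite{Keith} via a different argument; the purpose of stating them here is to exhibit them uniformly as consequences of the single linear relation \eqref{eqn:Identity5} between the full-rank and rank generating functions.
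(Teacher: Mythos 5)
Your proposal matches the paper's argument: the paper likewise establishes $f_5(0,s)=g_5(1,2)$ (via Corollary \ref{cor:General47Identity} and conjugation) and then simply cites Theorem 4 of Atkin and Swinnerton-Dyer for the mod-$5$ dissection of $g_5(1,2)$, exactly as you do. Your additional remarks on normalization and on part (1) following from \eqref{eqn:mod5} for the residue $4$ are correct bookkeeping, not a different route.
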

\begin{remark}
For $1\leq s\leq 4$, Theorem \ref{thm:IdentityTheorem5} (2) implies that $f_{5,2}\left(0,s\right)$ is a weakly holomorphic modular form of weight $\frac{1}{2}$, while Theorem \ref{thm:IdentityTheorem5} (3) implies that $f_{5,0}\left(0,s\right)$ is a mock theta function.
\end{remark}

We next turn to the case $t=7$.  The identities in Theorem \ref{thm:IdentityTheorem7} (1) other than $f_{7,0}\left(1,3\right)=0$ were already proven in Theorem 18 of Andrews \cite{Andrews1}.  Noting that $f_7(1,2)=0$ by Theorem \ref{thm:FullRankIdentity} (1) and the relation coming from conjugation, the only interesting cases which remain are $f_7(0,1)$ and $f_7(1,3)=f_7(2,3)$.  
\begin{theorem}\label{thm:IdentityTheorem7}
For $t=7$, we have the following identities for $f_{7,d}\left(r,s\right)$.
\begin{enumerate}
\item  For every $0\leq r<s\leq 3$ we have 
$$
f_{7,1}\left(r,s\right)=f_{7,5}\left(r,s\right)=f_{7,0}\left(1,3\right)=0.
$$
\item For $d=2$, we have 
$$
f_{7,2}(1,3)= -\frac{q^2}{\left(q^7;q^7\right)_{\infty}}\sum_{n\in \Z}\frac{\left(-1\right)^n q^{\frac{21n}{2}\left(n+1\right)}}{1-q^{7n+3}}.
$$
\item For $d=3$, we have 
$$
f_{7,3}(1,3)=-f_{7,3}(0,1)= \frac{ \left(q^7; q^7\right)_{\infty}}{ \left(q^2;q^7\right)_{\infty} \left(q^5;q^7\right)_{\infty}}.
$$
\item For $d=4$, we have 
$$
f_{7,4}(0,1)=-f_{7,4}(1,3)= \frac{\left(q^7;q^7\right)_{\infty}}  {\left(q^3;q^7\right)_{\infty} \left(q^4;q^7\right)_{\infty}}.
$$
\end{enumerate}
\end{theorem}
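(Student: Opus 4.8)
The plan is to run the same argument as in the case $t=5$ treated just above: the machinery developed in Section \ref{section:rankfullrank} reduces every $f_7(r,s)$ with $0\le r<s\le 3$ to an integral linear combination of the two classical rank differences $g_7(1,2)$ and $g_7(2,3)$, and one then reads off the closed forms from the results of Atkin and Swinnerton-Dyer \cite{AtkinSwinnertonDyer1} on the rank modulo $7$. Concretely, telescoping gives $f_7(r,s)=\sum_{\ell=r}^{s-1}f_7(\ell,\ell+1)$, so using $f_7(1,2)=0$ from Theorem \ref{thm:FullRankIdentity} (1) it suffices to evaluate $f_7(0,1)$ and $f_7(2,3)$. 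Corollary \ref{cor:General47Identity} with $t=7$ gives $f_7(7,8)=g_7(6,2)$ and $f_7(4,5)=g_7(3,2)$; since $7\equiv 0$, $8\equiv 1$, $4\equiv -3$, $5\equiv -2\pmod 7$, the symmetries \eqref{eqn:fullrankconjugation} and \eqref{eqn:conjugation} turn these into $f_7(0,1)=g_7(1,2)$ and $f_7(2,3)=g_7(2,3)$. Hence $f_7(0,1)=f_7(0,2)=g_7(1,2)$, $f_7(1,3)=f_7(2,3)=g_7(2,3)$, and $f_7(0,3)=g_7(1,2)+g_7(2,3)$, so the entire theorem becomes a collection of statements about the classical rank modulo $7$.

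Next I would dissect into arithmetic progressions. Writing $f_7(r,s;q)=\sum_{d=0}^{6}q^{d}f_{7,d}(r,s;q^{7})$, and likewise for $g_7$, the identities of the previous paragraph dissect coefficientwise, yielding for every residue $d$ the equalities $f_{7,d}(0,1)=f_{7,d}(0,2)=g_{7,d}(1,2)$, $f_{7,d}(1,3)=f_{7,d}(2,3)=g_{7,d}(2,3)$, and $f_{7,d}(0,3)=g_{7,d}(1,2)+g_{7,d}(2,3)$. In particular the sign relations in parts (3) and (4) are equivalent to $g_{7,3}(1,2)=-g_{7,3}(2,3)$ and $g_{7,4}(1,2)=-g_{7,4}(2,3)$, i.e., to $N(1,7;7n+3)=N(3,7;7n+3)$ and $N(1,7;7n+4)=N(3,7;7n+4)$. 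Thus the whole statement will follow once the rank differences $g_{7,d}(1,2)$ and $g_{7,d}(2,3)$ are known for the relevant $d$.

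It then remains to substitute the explicit evaluations of these rank differences, all of which are contained in the analysis of the rank modulo $7$ by Atkin and Swinnerton-Dyer \cite{AtkinSwinnertonDyer1}. Part (1) comes from the coincidences $N(1,7;7n+1)=N(2,7;7n+1)=N(3,7;7n+1)$ and $N(2,7;7n)=N(3,7;7n)$ together with Dyson's conjecture $N(r,7;7n+5)=N(s,7;7n+5)$, the last of which makes every $f_{7,5}(r,s)$ vanish; the Hecke-type formula of part (2) is the evaluation of $g_{7,2}(2,3)$; and the infinite products in parts (3) and (4) are the evaluations of $g_{7,3}(2,3)$ and $g_{7,4}(1,2)$, which together with the sign relations above give all four displayed identities. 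As in the case $t=5$, the only assertion not already contained in Andrews \cite{Andrews1} is $f_{7,0}(1,3)=0$, and this is now immediate from its identification with $g_{7,0}(2,3)$.

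Finally, a remark on where the difficulty lies. The manipulations with the conjugation symmetries, the telescoping, and the $7$-dissection are entirely routine, and, unlike in Theorem \ref{thm:GeneralFullRankInequality}, no estimation of coefficient growth is needed. The genuine obstacle is one of translation: Atkin and Swinnerton-Dyer record their results in a somewhat different normalization and for different linear combinations of the $N(r,7;7n+d)$, so completing the proof requires extracting from their identities the particular differences $g_{7,d}(1,2)$ and $g_{7,d}(2,3)$ and then simplifying the resulting eta-products until they take the forms displayed in parts (2)--(4), for instance recognizing $\frac{(q^{7};q^{7})_{\infty}}{(q^{2};q^{7})_{\infty}(q^{5};q^{7})_{\infty}}$ and $\frac{(q^{7};q^{7})_{\infty}}{(q^{3};q^{7})_{\infty}(q^{4};q^{7})_{\infty}}$ in the shapes in which they appear there.
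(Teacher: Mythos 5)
Your proposal is correct and follows essentially the same route as the paper: both reduce, via Corollary \ref{cor:General47Identity} and the symmetries \eqref{eqn:conjugation} and \eqref{eqn:fullrankconjugation}, to the identities $f_7(0,1)=g_7(1,2)$ and $f_7(2,3)=g_7(2,3)$, and then read off the dissected rank differences $g_{7,d}(1,2)$ and $g_{7,d}(2,3)$ from Theorem 5 of Atkin and Swinnerton-Dyer. Your additional bookkeeping (the telescoping through $f_7(1,2)=0$ and the restatement of the sign relations as $N(1,7;7n+d)=N(3,7;7n+d)$ for $d=3,4$) only makes explicit what the paper leaves implicit.
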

\begin{proof}
Corollary \ref{cor:General47Identity} and the symmetries in \eqref{eqn:conjugation} immediately imply
\begin{equation}\label{eqn:Identity7}
f_7(0,1)=f_7(7,8)=g_7(6,2)=g_7(1,2),\qquad\ \  f_7(2,3)=f_7(5,4)=-g_7(3,2)=g_7(2,3).
\end{equation}
The result then follows by Theorem 5 of \cite{AtkinSwinnertonDyer1}, where $g_{7,d}\left(1,2\right)$ and $g_{7,d}\left(2,3\right)$ are explicitly given for the choices of $d$ in (1), (2), (3), and (4).
\end{proof}

\section{Inequalities for the full rank}\label{section:FullRankInequality}
In this section, we will prove Theorem \ref{thm:GeneralFullRankInequality}.  For two $q$-series $f$ and $g$, we will abuse notation to use the abbreviations $f\gg g$ and $g\ll f$ to mean that for $n$ sufficiently large the $n$-th Fourier coefficient of $f$ is strictly greater than the $n$-th Fourier coefficient of $g$ (this will cause no confusion, since we do not require analytic bounds of this type within this paper).  Using this notation the statement of Theorem \ref{thm:GeneralFullRankInequality} may be rewritten as $f_t(r,s)\gg 0$.
\begin{proof}[Proof of Theorem \ref{thm:GeneralFullRankInequality}]
It is sufficient to show that $f_t(\evar,\evar+1)\gg 0$ for $\evar=0$ and for every $1< \evar <\frac{t-1}{2}$.  Indeed, this follows since 
$$
f_t(r,s)=f_t(r,r+1)+f_t(r+1,r+2)+\dots + f_t(s-1,s)
$$
and $f_t(1,2)=0$ by Theorem \ref{thm:FullRankIdentity} (1).  We will prove $f_t(\evar,\evar+1)\gg 0$ separately for the congruence classes $\evar\equiv 0,1,2\pmod{3}$.  Since $(t,3)=1$, the congruence classes $\evar\equiv 1\pmod{3}$, $\evar\equiv t-2\pmod{3}$, and $\evar\equiv 1-t\pmod{3}$ are distinct and hence cover all possible congruence classes.  We will prove in each case that $f_t\left(d,d+1\right)\gg 0$ by making an appropriate choice of $r\equiv 1\pmod{3}$ satisfying the conditions of Lemma \ref{lem:1mod3} and then using the symmetries of $f_t$ to relate $f_t(\evar,\evar+1)$ and $f_t(r,r+1)$.  We begin with the case $\evar\equiv 1\pmod{3}$.  In this case, we choose $r=\evar$ and will find that $f_t(r,r+1)\gg 0$ will even hold in the slightly more general setting where we allow $3\mid t$.
\begin{proposition}\label{prop:GeneralOneMod3Inequality}
Suppose that $t>9$ is an odd integer and $1<r<\frac{t-1}{2}$ is an integer satisfying $r\equiv 1\pmod{3}$.  Then  
$$
f_t\left(r,r+1\right)\gg 0.
$$ 
\end{proposition}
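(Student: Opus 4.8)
The plan is to start from the formula provided by Lemma \ref{lem:1mod3}, which expresses $f_t(r,r+1)$ as a telescoping-type sum
$$
f_t(r,r+1)=\sum_{m=0}^{\left\lceil \frac{r-1}{6}\right\rceil -1} g_t\left(r-1-3m,\tfrac{t-3}{2}-3m\right),
$$
and to reduce the claimed positivity $f_t(r,r+1)\gg 0$ to a positivity statement about the rank difference generating functions $g_t(a,b)$. The natural tool here is Theorem \ref{thm:RankDifferences} (the result of \cite{BringmannKane1}): for $t\geq 11$ odd and $0\leq a<b\leq\frac{t-1}{2}$ one has $g_t(a,b)\gg 0$. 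So the heart of the argument is bookkeeping: for each $m$ in the range, identify the pair $\{r-1-3m \bmod t,\ (\tfrac{t-3}{2}-3m)\bmod t\}$, use the conjugation symmetry \eqref{eqn:conjugation} (which lets us replace $a$ by $-a$ and reorder at the cost of a sign) to bring both arguments into the fundamental domain $[0,\tfrac{t-1}{2}]$, and check that after this normalization the first argument is strictly smaller than the second, so that the term is $\gg 0$. If every term is $\gg 0$ then the whole sum is $\gg 0$, which is what we want.

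First I would pin down the ranges. Since $1<r<\frac{t-1}{2}$ and $r\equiv 1\pmod 3$, the index $m$ runs over $0\le m\le \lceil\frac{r-1}{6}\rceil-1$, so $r-1-3m$ decreases from $r-1$ down to roughly $\frac{r-1}{2}$, staying positive and well below $\frac{t-1}{2}$; in particular no reduction mod $t$ is needed for the first argument, and it is already a "small" positive integer. The second argument $\frac{t-3}{2}-3m$ starts just below $\frac{t}{2}$ and also decreases by $3$ each step; for the relevant range of $m$ it stays in $(\frac{t-1}{4},\frac{t}{2}]$ roughly, hence lies in $[0,\tfrac{t-1}{2}]$ and is strictly larger than $r-1-3m$ provided $r-1-3m<\tfrac{t-3}{2}-3m$, i.e. $r-1<\tfrac{t-3}{2}$, i.e. $r<\tfrac{t-1}{2}$ — which is exactly our hypothesis. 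So each summand is of the form $g_t(a,b)$ with $0\le a<b\le\tfrac{t-1}{2}$, possibly with $a$ and $b$ equal in a degenerate case that I would rule out or observe contributes $0$. Once in this shape, Theorem \ref{thm:RankDifferences} (applicable since $t>9$ odd forces $t\ge 11$) gives $g_t(a,b)\gg 0$ for each term.

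The one subtlety I anticipate is the hypothesis $t\geq 11$ versus the stated $t>9$: these coincide for odd $t$, so that is fine, but I should double-check the boundary pairs excluded by Theorem \ref{thm:RankDifferences}'s statement (it requires $0\le a<s\le\frac{t-1}{2}$ with no further exclusions, so there is nothing lost) and also the edge case $r=4$, where the sum has a single term $g_t(3,\tfrac{t-3}{2})$ — this matches Corollary \ref{cor:General47Identity} and is manifestly covered. A second point of care: when $\lceil\frac{r-1}{6}\rceil$ rounds up, the final value of $m$ might push $r-1-3m$ slightly below $\tfrac{r-1}{2}$ or make the inequality $a<b$ tight; I would verify that even in that worst case $a<b$ holds strictly (it does, since the gap $b-a=\tfrac{t-3}{2}-(r-1)=\tfrac{t-1}{2}-r\ge 1$ is independent of $m$), so no term degenerates. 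The main obstacle, then, is not conceptual but purely the careful index-chasing to confirm that all arguments land in the fundamental domain without needing any mod-$t$ wraparound that would flip an inequality; once that is checked, the proposition follows termwise from Theorem \ref{thm:RankDifferences}.

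Thus the proof structure is: (i) invoke Lemma \ref{lem:1mod3} to write $f_t(r,r+1)$ as the displayed sum; (ii) for the range $0\le m\le\lceil\frac{r-1}{6}\rceil-1$, verify $0\le r-1-3m<\tfrac{t-3}{2}-3m\le\tfrac{t-1}{2}$, using $r<\tfrac{t-1}{2}$; (iii) apply Theorem \ref{thm:RankDifferences} (valid as $t\ge 11$) to conclude each $g_t(r-1-3m,\tfrac{t-3}{2}-3m)\gg 0$; (iv) sum finitely many $\gg 0$ series to get $f_t(r,r+1)\gg 0$, noting that a finite sum of series each of which has all sufficiently large coefficients positive again has all sufficiently large coefficients positive.
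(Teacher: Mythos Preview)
Your proposal is correct and follows essentially the same approach as the paper: invoke Lemma~\ref{lem:1mod3} to express $f_t(r,r+1)$ as the displayed sum, verify that each argument pair $(r-1-3m,\tfrac{t-3}{2}-3m)$ lies in the range $0\le a<b\le\tfrac{t-1}{2}$ required by Theorem~\ref{thm:RankDifferences}, and conclude termwise. Your explicit verification of the inequalities (in particular the observation that the gap $b-a=\tfrac{t-1}{2}-r\ge 1$ is independent of $m$) is more detailed than the paper's terse ``after checking that the conditions of Theorem~\ref{thm:RankDifferences} are satisfied,'' but the argument is the same.
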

\begin{proof}
We first use Lemma \ref{lem:1mod3} to rewrite $f_t\left(r,r+1\right)$ as 
\begin{equation}\label{eqn:ftrr1}
f_t\left(r,r+1\right) = \sum_{m=0}^{\left\lceil \frac{r-1}{6}\right\rceil -1} g_t\left(r-1 -3m,\frac{ t-3}{2} - 3m\right).
\end{equation}
For $0\leq m\leq \left\lceil\frac{r-1}{6}\right\rceil-1$, after checking that the conditions of Theorem \ref{thm:RankDifferences} are satisfied, one obtains that 
\begin{equation}\label{eqn:gtgg1mod3}
g_t\left(r-1-3m,\frac{t-3}{2}-3m\right)\gg 0.
\end{equation}
Combining \eqref{eqn:gtgg1mod3} and \eqref{eqn:ftrr1} implies that $f_t\left(r,r+1\right)\gg 0$, completing the proof.
\end{proof}
We next consider the case $\evar\equiv t-2\pmod{3}$. 
\begin{lemma}\label{lem:rt-2mod3}
Suppose that $t>9$ is odd with $(t,3)=1$ and $0\leq \evar<\frac{t-1}{2}$ satisfies $\evar\equiv t-2\pmod{3}$.  Then $f_t(\evar,\evar+1)\gg 0$. 
\end{lemma}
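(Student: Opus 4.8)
The plan is to imitate the argument for $d\equiv 1\pmod 3$ (Proposition \ref{prop:GeneralOneMod3Inequality}), first moving $d$ to a value congruent to $1$ modulo $3$ via the conjugation symmetry and only then invoking Lemma \ref{lem:1mod3}. Since $(t,6)=1$ and $d\equiv t-2\pmod 3$, the index $r:=t-1-d$ satisfies $r\equiv 1\pmod 3$, and $1<r\le t-1$ because $\frac{t+1}{2}\le r\le t-1$ for $t\ge 11$. By \eqref{eqn:fullrankconjugation} together with $t$-periodicity of $f_t$ in its arguments we have $f_t(d,d+1)=-f_t(r,r+1)$. Applying Lemma \ref{lem:1mod3} to $f_t(r,r+1)$ and then using $-g_t(a,b)=g_t(b,a)$, the single-variable symmetry $g_t(b)=g_t(-b)$ from \eqref{eqn:conjugation}, and the congruence $-(r-1-3m)\equiv d+2+3m\pmod t$, I would obtain the clean identity
\begin{equation*}
f_t(d,d+1)=\sum_{m=0}^{M} g_t\left(\frac{t-3}{2}-3m,\; d+2+3m\right),\qquad M:=\left\lceil\frac{t-2-d}{6}\right\rceil-1 .
\end{equation*}

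With this identity in place, I would prove $f_t(d,d+1)\gg 0$ as follows. First fold every argument into the fundamental range $\{0,1,\dots,\frac{t-1}{2}\}$ via $g_t(x)=g_t(t-x)$: the first arguments $\frac{t-3}{2}-3m$ already lie there (a short check using the value of $M$), while each second argument $d+2+3m$ is replaced by $t-d-2-3m$ once it exceeds $\frac{t-1}{2}$. Writing each $g_t(a,b)=g_t(a)-g_t(b)$ then expresses $f_t(d,d+1)=\sum_{m}g_t(p_m)-\sum_{m}g_t(n_m)$ for two explicit multisets $P=\{p_m\}$ and $N=\{n_m\}$ of size $M+1$ inside $\{0,1,\dots,\frac{t-1}{2}\}$. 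The key point is an interlacing claim: when the elements of $P$ and of $N$ are each listed in increasing order, the $i$-th element of $P$ is strictly smaller than the $i$-th element of $N$ for every $i$. Granting this, one re-pairs the two sums as $\sum_i g_t(p_{(i)},n_{(i)})$ with $0\le p_{(i)}<n_{(i)}\le\frac{t-1}{2}$; since $t>9$ is odd with $(t,3)=1$ we have $t\ge 11$, so Theorem \ref{thm:RankDifferences} gives $g_t(p_{(i)},n_{(i)})\gg 0$ for each $i$, hence $f_t(d,d+1)\gg 0$, with strict inequality because each summand is strict.

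The main obstacle is establishing the interlacing $p_{(i)}<n_{(i)}$. This is elementary but delicate: $P$ is the arithmetic progression $\frac{t-3}{2},\frac{t-3}{2}-3,\dots,\frac{t-3}{2}-3M$ with common difference $-3$, whereas $N$ is unimodal — it increases along $d+2,d+5,\dots$ until the folding threshold near $m\approx\frac{t-2d-5}{6}$ is crossed, then decreases along $t-d-2-3m$ — so one must locate that threshold precisely (its position depends on the size of $d$ relative to $t$), sort $N$ accordingly, and compare the two sorted sequences entry by entry. I expect the verification to split into the subcases $d\equiv 0$ and $d\equiv 2\pmod 3$ (equivalently $t\equiv 2$ and $t\equiv 1\pmod 3$), together with a quick separate treatment of the short cases $M=0$ and $M=1$, where the displayed identity frequently already exhibits each summand directly as some $g_t(r,s)$ with $r<s$, much as in Corollary \ref{cor:General47Identity}.
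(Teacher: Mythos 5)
Your reduction is correct and matches the paper's opening moves exactly: setting $r:=t-1-d$, using \eqref{eqn:fullrankconjugation} to get $f_t(d,d+1)=-f_t(r,r+1)$, and feeding $r$ into Lemma \ref{lem:1mod3} (the hypotheses $1<r\le 3t+1$ and $r\equiv 1\pmod 3$ do hold, since $\frac{t-1}{2}<r\le t-1$). The identity you derive, $f_t(d,d+1)=\sum_{m=0}^{M}g_t\bigl(\frac{t-3}{2}-3m,\,d+2+3m\bigr)$ with $M=\lceil\frac{t-2-d}{6}\rceil-1$, checks out, and your endgame --- re-pair the positive and negative $g_t$-terms so that every pair lands in the range $0\le p<n\le\frac{t-1}{2}$ where Theorem \ref{thm:RankDifferences} applies --- is exactly what the paper's proof accomplishes by explicit changes of variables (the shifts by $\ell=j-\frac{t\mp 1}{6}$ and the regroupings leading to \eqref{eqn:bndt12} and \eqref{eqn:negll2}). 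So conceptually you are on the paper's track, with ``sort both multisets and interlace'' playing the role of ``shift indices and cancel.''

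The gap is that the interlacing claim $p_{(i)}<n_{(i)}$ --- which is the entire content of the lemma once the identity is written down --- is asserted and then deferred (``I expect the verification to split into the subcases\dots''). Everything before it is bookkeeping; this is the step where the hypotheses $0\le d<\frac{t-1}{2}$ and $(t,3)=1$ are actually used, and where the argument could in principle fail. The claim does appear to be true: writing $d=t-2-3j$ turns $N$ into $\{\min(3k,t-3k):\,j-M\le k\le j\}$ and $P$ into $\{\frac{t-3}{2}-3m:\,0\le m\le M\}$, and the sets $P$, folded $N$, and unfolded $N$ lie in three distinct residue classes modulo $3$, after which the entrywise comparison works out in the two subcases $t\equiv\pm 1\pmod 3$; this corresponds precisely to the boundary checks the paper performs after \eqref{eqn:bndt12} and \eqref{eqn:negll2}. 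But as written your proof is a plan whose central verification is missing: to complete it you must locate the fold index explicitly, write the two sorted sequences in closed form in each subcase, and prove the entrywise inequality.
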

\begin{proof}
Setting $r:=t-1-\evar$, one sees immediately that $r\equiv 1\pmod{3}$ and $\frac{t-1}{2}< r \leq t-1$. By \eqref{eqn:fullrankconjugation}, it follows that
$$
f_t\left(r,r+1\right) = -f_t\left(\evar,\evar+1\right).
$$
Hence showing that $f_t(\evar,\evar+1)\gg 0$ is equivalent to showing that $f_t(r,r+1)\ll 0$. 

Following the notation from the proof of Lemma \ref{lem:1mod3}, we write $r=1+3j$.  We expand \eqref{eqn:rmod1arbitrary} and split into two sums to obtain
\begin{equation}\label{eqn:toexpand}
f_t(r,r+1) = \sum_{m=0}^{\left\lceil \frac{j}{2}\right\rceil -1} g_t(3j -3m) - \sum_{m=0}^{\left\lceil \frac{j}{2}\right\rceil -1} g_t\left(\frac{ t-3}{2}  - 3m\right).
\end{equation}
For $t\equiv \pm 1 \pmod{3}$, we define $\, \ell:=j-\frac{t\mp 1}{6}\in \Z$ and then make the change of variables $m\to m+\ell$ in the first sum of \eqref{eqn:toexpand} to obtain
\begin{equation}\label{eqn:toexpand2}
f_t(r,r+1) = \sum_{m=-{\ell}}^{\left\lceil \frac{j}{2}\right\rceil -1-\ell}g_t\left( \frac{t\mp 1}{2} -3m\right) - \sum_{m=0}^{\left\lceil \frac{j}{2}\right\rceil -1} g_t\left(\frac{ t-3}{2}  - 3m\right).
\end{equation}

First consider the case $t\equiv 1\pmod{3}$.  A straightforward calculation shows that the reverse inequality of the conditions of Theorem \ref{thm:RankDifferences} is satisfied for the difference of the terms $0\leq m\leq \left\lceil \frac{j}{2}\right\rceil -1-\ell$ coming from the two sums in \eqref{eqn:toexpand2}, so that 
\begin{equation}\label{eqn:posll}
\sum_{m=0}^{\left\lceil \frac{j}{2}\right\rceil -1-\ell} g_t\left(\frac{t-1}{2} -3m,\frac{t-3}{2}-3m\right)  \ll 0.
\end{equation}
Combining \eqref{eqn:posll} with \eqref{eqn:toexpand2} gives
\begin{equation}\label{eqn:bndt1}
f_t(r,r+1) \ll \sum_{m=-{\ell}}^{-1}g_t\left( \frac{t- 1}{2} -3m\right) - \sum_{m=\left\lceil \frac{j}{2}\right\rceil -\ell}^{\left\lceil \frac{j}{2}\right\rceil -1} g_t\left(\frac{ t-3}{2}  - 3m\right).
\end{equation}
In the first sum of \eqref{eqn:bndt1} we make use of the symmetry \eqref{eqn:conjugation} and then change variables $m\to -m$, while in the second sum of \eqref{eqn:bndt1} we make the change of variables $m\to m+\left\lceil \frac{j}{2}\right\rceil -1-\ell$.  Thus \eqref{eqn:bndt1} becomes
\begin{equation}\label{eqn:bndt12}
f_t\left(r,r+1\right) \ll \sum_{m=1}^{\ell}g_t\Bigg(\frac{t+1}{2} -3m, \frac{t-3}{2}-3\left(\left\lceil\frac{j}{2}\right\rceil-1-\ell\right) -3m\Bigg).
\end{equation}
After carefully checking the necessary boundary conditions, we use Theorem \ref{thm:RankDifferences} again for each $1\leq m\leq \ell$ to establish that $f_t\left(r,r+1\right)\ll 0$.

For the case $t\equiv -1\pmod{3}$, we follow a similar argument.  We begin by making the change of variables $m\to -m$ in the first sum of \eqref{eqn:toexpand2} and then use conjugation \eqref{eqn:conjugation} to rewrite \eqref{eqn:toexpand2} as 
\begin{equation}\label{eqn:toexpandt-1}
f_t(r,r+1) = \sum_{m=\ell - \left\lceil \frac{j}{2}\right\rceil +1}^{\ell} g_t\left( \frac{t- 1}{2} -3m\right) - \sum_{m=0}^{\left\lceil \frac{j}{2}\right\rceil -1} g_t\left(\frac{ t-3}{2}  - 3m\right).
\end{equation}
We then employ Theorem \ref{thm:RankDifferences} to establish
$$
\sum_{m=0}^{\ell} g_t\left(\frac{t- 1}{2} -3m,\frac{t-3}{2}  - 3m\right)\ll 0.
$$
Thus \eqref{eqn:toexpandt-1} can be bounded by 
\begin{equation}\label{eqn:negll}
\sum_{m=\ell - \left\lceil \frac{j}{2}\right\rceil +1}^{-1} g_t\left( \frac{t- 1}{2} -3m\right) - \sum_{m=\ell+1}^{\left\lceil \frac{j}{2}\right\rceil -1} g_t\left(\frac{ t-3}{2}  - 3m\right).
\end{equation}
We make the change of variables $m\to -m$ and use conjugation \eqref{eqn:conjugation} in the first sum of \eqref{eqn:negll} while shifting the second sum of \eqref{eqn:negll} by $m\to m + \ell$.  Thus \eqref{eqn:negll} can be rewritten as
\begin{equation}\label{eqn:negll2}
\sum_{m=1}^{\left\lceil \frac{j}{2}\right\rceil-\ell- 1}g_t\left( \frac{t+1}{2}-3m, \frac{t-3}{2} - 3\ell -3m\right).
\end{equation}
After carefully checking that the boundary conditions are satisfied, we use Theorem \ref{thm:RankDifferences} once more for each $1\leq m\leq \left\lceil \frac{j}{2}\right\rceil-\ell- 1$ to establish the result.
%%%%%%%%%%%%%%%%%%%%%%%%%%%%%%%%%%%%%%%%%%%%%%%%%%%%%%%%%%%%%%%%%%%%%%%%%
\end{proof}

We now move on to the final lemma which we will require to cover all possible choices of $d$ modulo 3 in the proof of Theorem \ref{thm:GeneralFullRankInequality}.
\begin{lemma}\label{lem:r1-tmod3}
Suppose that $t>9$ is odd with $(t,3)=1$ and $0\leq \evar<\frac{t-1}{2}$ satisfies $\evar\equiv 1-t\pmod{3}$.  Then $f_t(\evar,\evar+1)\gg 0$.
\end{lemma}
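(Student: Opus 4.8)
The plan is to follow the template of Lemma~\ref{lem:rt-2mod3}: reduce the claim to Lemma~\ref{lem:1mod3} together with Theorem~\ref{thm:RankDifferences}, but use periodicity of the full rank to reach an index that is $\equiv 1\pmod 3$. Concretely, $\NF_2(r,t;n)$, and hence $f_t(r,s)$, depends on $r$ and $s$ only modulo $t$, so $f_t(\evar,\evar+1)=f_t(\evar+t,\evar+t+1)$. Since $\evar\equiv 1-t\pmod 3$, the index $r:=\evar+t$ satisfies $r\equiv 1\pmod 3$, and $0\le \evar<\frac{t-1}{2}$ gives $t\le r<\frac{3t-1}{2}$, so in particular $1<r\le 3t+1$ and Lemma~\ref{lem:1mod3} applies to $f_t(r,r+1)=f_t(\evar,\evar+1)$. (Equivalently, combining periodicity with \eqref{eqn:fullrankconjugation} one may instead work with $r=2t-1-\evar\equiv 1\pmod 3$, for which $f_t(\evar,\evar+1)=-f_t(r,r+1)$; either normalization is fine.) Expanding by Lemma~\ref{lem:1mod3} and reducing the arguments of $g_t$ modulo $t$ then yields
\[
f_t(\evar,\evar+1)=\sum_{m=0}^{\left\lceil \frac{\evar+t-1}{6}\right\rceil-1} g_t\!\left(\evar-1-3m,\ \frac{t-3}{2}-3m\right).
\]

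Next I would bound each summand by Theorem~\ref{thm:RankDifferences}. Folding both arguments into $\{0,1,\dots,\frac{t-1}{2}\}$ via the symmetry $g_t(a)=g_t(t-a)$ from \eqref{eqn:conjugation} rewrites the $m$-th term as $g_t(\widehat{a}_m,\widehat{b}_m)$ with $\widehat{a}_m,\widehat{b}_m\in\{0,\dots,\frac{t-1}{2}\}$; since $t$ is odd and $t>9$ forces $t\ge 11$, Theorem~\ref{thm:RankDifferences} gives $g_t(\widehat{a}_m,\widehat{b}_m)\gg 0$ whenever $\widehat{a}_m<\widehat{b}_m$. For small $m$ this holds outright, but — exactly as in the proof of Lemma~\ref{lem:rt-2mod3} — for $m$ near the top of the range the reduction can reverse the order of the two indices (or make them coincide), producing terms that are $\ll 0$ or $0$. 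I would split off these exceptional terms and, by a change of variables ($m\to t-m$, or a shift inside the ceiling) together with \eqref{eqn:conjugation}, pair each of them with a strictly positive term, so that the surviving expression is a sum of $g_t(a,b)$ with $0\le a<b\le\frac{t-1}{2}$; summing these strict inequalities yields $f_t(\evar,\evar+1)\gg 0$. As in Lemma~\ref{lem:rt-2mod3}, the cleanest organization separates the cases $t\equiv 1\pmod 3$ and $t\equiv -1\pmod 3$, and within each distinguishes the parity that governs the ceiling.

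The main obstacle is precisely this last cancellation step: pinning down which summands fold to a reversed or degenerate pair and exhibiting an explicit pairing of them against strictly positive summands, so that nothing non-positive is left over. One must also verify, case by case, that every pair $(\widehat{a}_m,\widehat{b}_m)$ ultimately retained satisfies $0\le \widehat{a}_m<\widehat{b}_m\le\frac{t-1}{2}$ so that Theorem~\ref{thm:RankDifferences} applies with the correct orientation, and check directly the finitely many boundary situations — the extreme values of $\evar$ (such as $\evar=0$) and the smallest admissible moduli $t$ — where the index range is too short for the generic argument.
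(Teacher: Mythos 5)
Your opening move is exactly the paper's: set $r:=\evar+t$, note $r\equiv 1\pmod 3$ and $t\le r<\frac{3t-1}{2}$, use periodicity to get $f_t(\evar,\evar+1)=f_t(r,r+1)$, and expand via Lemma \ref{lem:1mod3}. Up to that point the proposal matches the paper's proof. The problem is everything after: the entire content of the lemma lies in showing that the resulting sum of $\left\lceil\frac{\evar+t-1}{6}\right\rceil$ terms $g_t\left(r-1-3m,\frac{t-3}{2}-3m\right)$ --- many of which, after folding the first argument modulo $t$ into $\left\{0,\dots,\frac{t-1}{2}\right\}$, are individually negative or degenerate --- collapses to something manifestly positive. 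You explicitly defer this step ("the main obstacle is precisely this last cancellation step"), so the proof is not complete.

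Moreover, the mechanism you sketch for that step is not the one that actually works. You propose to \emph{pair each reversed or degenerate summand against a strictly positive summand} so that the bad terms are dominated. What the paper does instead is an exact algebraic cancellation: it splits every $g_t(a,b)$ into $g_t(a)-g_t(b)$, normalizes both resulting sums to the form $\sum_m g_t(c_i-3m)$ with $c_1:=t-(r-1)+3d_1$ and $c_2:=\frac{t-3}{2}-3d_2$ chosen in $\{0,1,2\}$ (using $(t,3)=1$ to get $c_1+c_2=3$), splits and re-indexes so that two long blocks of identical $g_t$-values appear with opposite signs and cancel \emph{identically} (see \eqref{eqn:Bh} through \eqref{eqn:cancel}), and is left with a short sum of length $d_2-d_1+1$ which is then recombined into differences $g_t(a,b)$ each verified against the hypotheses of Theorem \ref{thm:RankDifferences}, yielding the identity \eqref{eqn:tobound}. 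This is a global rearrangement, not a term-by-term domination, and it is also structurally different from the proof of Lemma \ref{lem:rt-2mod3} (which settles for an inequality $\ll$ rather than an identity), so modelling the argument on that lemma's "template" does not supply the missing bookkeeping. Without an explicit execution of the cancellation --- identifying which indices survive and checking that each surviving pair satisfies $0\le a<b\le\frac{t-1}{2}$ --- the claim $f_t(\evar,\evar+1)\gg 0$ has not been established.
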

\begin{proof}
We set $r:=\evar+t$ so that $r\equiv 1\pmod{3}$, $t\leq r < \frac{3t-1}{2}$, and
$$
f_t\left(r,r+1\right)=f_t\left(\evar,\evar+1\right).
$$
Therefore, it suffices to show that $f_t(r,r+1)\gg 0$ with the given boundary conditions on $r$.  

We begin by using Lemma \ref{lem:1mod3} to write $f_t(r,r+1)$ as two sums and then shift both sums to be of type
\begin{equation}\label{eqn:hdef}
\sum_{m=-a}^{b} g_t\big(c-3m\big)
\end{equation}
with $c\geq 0$ as small as possible.  To this end, we choose $d_1$ and $d_2$ so that 
\begin{align}\label{eqn:c1def}
c_1&:=t-(r-1)+3d_1\in \{0,1,2\},\\
\label{eqn:c2def}
c_2&:=\frac{t-3}{2}-3d_2\in \{0,1,2\}.
\end{align}
Since $r-1\equiv 0\pmod{3}$, we see that $c_1\equiv t\pmod{3}$ and $c_2\equiv -t\pmod{3}$.  Using the fact that $(t,3)=1$, we see that $c_1+c_2=3$.  
Using conjugation \eqref{eqn:conjugation} and shifting $m\to -m+d_1$ in the first sum from Lemma \ref{lem:1mod3} and $m\to m+d_2$ in the second sum yields
\begin{equation}\label{eqn:Bh}
f_t\left(r,r+1\right)=\sum_{m=d_1-\left\lceil\frac{j}{2}\right\rceil+1}^{d_1}g_t\left(c_1-3m\right)  - \sum_{m=-d_2}^{\left\lceil\frac{j}{2}\right\rceil-d_2-1}g_t\left(c_2-3m\right),
\end{equation}
where we denote $r=1+3j$ as in the proof of Lemma \ref{lem:1mod3}.  
We now split the first sum into two sums with the terms $m\leq 1$ and $m>1$, while splitting the second sum into the two sums separated by $m< 0$ and $m\geq 0$.  
We next make the change of variables $m\to m+1$ in the sum of terms $m\leq 1$ from the first sum of \eqref{eqn:Bh} and also $m\to m+1$ in the sum of terms $m<0$ from the second sum of \eqref{eqn:Bh}.  Using $c_1+c_2=3$, we can rewrite $f_t\left(r,r+1\right)$ as 
\begin{equation}\label{eqn:aftersplit}
\sum_{m=d_1-{\left\lceil\frac{j}{2}\right\rceil}}^0 g_t\left( -c_2-3m\right) - \sum_{m=0}^{\left\lceil\frac{j}{2}\right\rceil-d_2-1} g_t\left(c_2-3m\right) + \sum_{m=2}^{d_1} g_t\left(c_1-3m\right) -\sum_{m=-d_2-1}^{-2} g_t\left(-c_1-3m\right).
\end{equation}
We now make the change of variables $m\to -m$ and use conjugation \eqref{eqn:conjugation} in the first and last sums of \eqref{eqn:aftersplit}, yielding
\begin{equation}\label{eqn:match}
\sum_{m=0}^{\left\lceil\frac{j}{2}\right\rceil-d_1} g_t\left(c_2-3m\right) - \sum_{m=0}^{\left\lceil\frac{j}{2}\right\rceil-d_2-1} g_t\left(c_2-3m\right) + \sum_{m=2}^{d_1} g_t\left(c_1-3m\right)- \sum_{m=2}^{d_2+1} g_t\left(c_1-3m\right).
\end{equation}
Comparing \eqref{eqn:c1def} with \eqref{eqn:c2def} along with the restriction $t\leq r < t+\frac{t-1}{2}$, we have $0\leq d_1< d_2$.  We cancel terms in the first two and last two summands of \eqref{eqn:match} to rewrite 
\begin{equation}\label{eqn:cancel}
f_t\left(r,r+1\right)=\sum_{m=\left\lceil\frac{j}{2}\right\rceil-d_2}^{\left\lceil\frac{j}{2}\right\rceil-d_1} g_t\left(c_2-3m\right) - \sum_{m=d_1+1}^{d_2+1}g_t\left(c_1-3m\right).
\end{equation}
Next we make the change of variables $m\to -m-\left\lceil\frac{j}{2}\right\rceil+d_2$ in the first sum of \eqref{eqn:cancel} and the change of variables $m\to -m-\left(d_1+1\right)$ in the second sum and then use conjugation \eqref{eqn:conjugation} on the terms in both sums.  Again noting the fact that $c_1+c_2=3$, this yields
\begin{equation}\label{eqn:tobound}
f_t\left(r,r+1\right)=\sum_{m=0}^{d_2-d_1} g_t\left( 3\left\lceil\frac{j}{2}\right\rceil-3-3d_2+c_1 +3m, 3d_1+c_2 +3m\right).
\end{equation}
We see by \eqref{eqn:c1def} and \eqref{eqn:c2def} that $d_1=\frac{r-1}{3}+\left\lfloor -\frac{t}{3}\right\rfloor$ and $d_2=\left\lfloor\frac{t-3}{6}\right\rfloor$, which may be used to easily verify the inequalities necessary to use Theorem \ref{thm:RankDifferences}.  Hence for $0\leq m\leq d_2-d_1$, we have that 
$$
g_t\left(3\left\lceil\frac{j}{2}\right\rceil-3-3d_2 +c_1 +3m, 3d_1+c_2 + 3m\right)\gg 0,
$$
from which \eqref{eqn:tobound} implies that $f_t(r,r+1)\gg 0$.  
\end{proof}

The proof of Theorem \ref{thm:GeneralFullRankInequality} now follows by Lemmas \ref{lem:1mod3}, \ref{lem:rt-2mod3}, and \ref{lem:r1-tmod3}.
\end{proof}

\section{Inequalities for small moduli}\label{section:smallinequalities}
For small choices of $t$ for which Theorem \ref{thm:GeneralFullRankInequality} does not apply, we get positivity or negativity results for the difference $\NF_2(r,t;n)-\NF_2(s,t;n)$ depending on the congruence class of $n$ modulo $t$.

We begin with the case $t=2$.  In this case, we only have to consider $(r,s)=(0,1)\equiv (2,1)\pmod{2}$.  Therefore we may use Theorem \ref{thm:FullRankIdentity} (2) to conclude that 
$$
\NF_2\left(0,2;n\right)\geq \NF_2\left(1,2;n\right),
$$
with strict inequality if and only if $n=2$ or $n>3$.

We next give the relevant inequalities when $t=4$.  In this case we only need to consider $0\leq r<s\leq 2$.  We have already shown in Theorem \ref{thm:FullRankIdentity} (2) that whenever $n> 4$ or $n=3$, the inequality
$$
\NF_2(1,4;n)<\NF_2(2,4;n)
$$
holds, while for $n\leq 2$ and $n=4$ we have equality.  We consider the remaining cases in the theorem below.
\begin{theorem} \label{thm:SmallInequality4}
\noindent

\noindent
\begin{enumerate}
\item
Suppose that $s\in\left\{1,2\right\}$.  Then for every $n\geq 1$ we have that
$$
\NF_2(0,4; 2n)>\NF_2(s,4;2n).
$$
\item
For $n\geq 4$ we have that
$$
\NF_2(0,4;2n+1)>\NF_2(1,4;2n+1).
$$
For $n=0$ and $n=2$ we have the equality, while for $n\in \{1,3\}$ we have the reverse inequality.
\item
For $n\geq 1$ one obtains that
$$
\NF_2(0,4;2n+1)<\NF_2(2,4;2n+1),
$$
while $\NF_2(0,4;1)=\NF_2(2,4;1)$.
\end{enumerate}
\end{theorem}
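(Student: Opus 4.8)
The plan is to reduce all three inequalities to statements about the classical rank counts $N\left(r,4;n\right)$ and then to settle those by a mixture of small-$n$ computation and the known asymptotics of the rank generating functions.

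First I would record the analogues of Proposition~\ref{prop:FullRankEvenEquality} for the pairs $\left(0,1\right)$ and $\left(0,2\right)$. Besides the identity $f_4\left(1,2\right)=-\tfrac12 g_4\left(2\right)$ already supplied by Proposition~\ref{prop:FullRankEvenEquality}, it suffices to compute one more, say $f_4\left(0,1\right)$: starting from the defining sum \eqref{eqn:ftrsdef} for $f_4\left(0,1\right)$, pair the $j$ and $4-j$ terms, use the symmetry $R_2\left(x,y;q\right)=R_2\left(x^{-1},y^{-1};q\right)$ together with the partial fraction identity $\left(x-y+x^{-1}-y^{-1}\right)R_2\left(x,y;q\right)=G\left(x;q\right)-G\left(y;q\right)$ from the proof of Proposition~\ref{prop:FullRankEvenEquality} at $\zeta_4=i$, and finally express $R\left(1;q\right)=\left(q\right)_\infty^{-1}$, $R\left(-1;q\right)=f\left(q\right)$, and $R\left(i;q\right)$ in terms of $g_4\left(0\right),g_4\left(1\right),g_4\left(2\right)$ by the orthogonality relations for fourth roots of unity. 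This should give
\[
f_4\left(0,1\right)=g_4\left(1\right)-\tfrac12 g_4\left(2\right),\qquad f_4\left(0,2\right)=f_4\left(0,1\right)+f_4\left(1,2\right)=g_4\left(1\right)-g_4\left(2\right),
\]
that is, $\NF_2\left(0,4;n\right)-\NF_2\left(1,4;n\right)=N\left(1,4;n\right)-\tfrac12 N\left(2,4;n\right)$ and $\NF_2\left(0,4;n\right)-\NF_2\left(2,4;n\right)=N\left(1,4;n\right)-N\left(2,4;n\right)$.

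Next I would translate the three parts, using $N\left(1,4;n\right)=N\left(3,4;n\right)$ and $\sum_r N\left(r,4;n\right)=p\left(n\right)$ to rewrite $N\left(1,4;n\right)-N\left(2,4;n\right)=\tfrac12\left(c_\phi\left(n\right)-c_f\left(n\right)\right)$ and $2N\left(1,4;n\right)-N\left(2,4;n\right)=N\left(0,4;n\right)-c_f\left(n\right)$, where $f\left(q\right)=R\left(-1;q\right)=\sum_n c_f\left(n\right)q^n$ is the mock theta function from the introduction and $\phi\left(q\right):=R\left(i;q\right)=\sum_n c_\phi\left(n\right)q^n$. Then Part (1) is equivalent to $N\left(1,4;2n\right)>N\left(2,4;2n\right)$ for $n\ge1$ (the case $s=1$ following since $N\left(1,4;2n\right)-\tfrac12 N\left(2,4;2n\right)\ge N\left(1,4;2n\right)-N\left(2,4;2n\right)>0$); Part (3) is equivalent to $N\left(2,4;2n+1\right)>N\left(1,4;2n+1\right)$ for $n\ge1$ together with the trivial $N\left(1,4;1\right)=N\left(2,4;1\right)=0$; and Part (2) is equivalent to the assertion that $N\left(0,4;2n+1\right)-c_f\left(2n+1\right)$ vanishes for $2n+1\in\left\{1,5\right\}$, is negative for $2n+1\in\left\{3,7\right\}$, and is positive for $2n+1\ge9$.

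Finally, these reduced statements are proved by separating small and large $n$: below an explicit bound they are finite computations, and for $n$ large one invokes the circle-method asymptotics of \cite{Bringmann1} for the coefficients of $R\left(\zeta;q\right)$ with $\zeta$ a root of unity. Since $p\left(n\right)$ grows like $e^{2\pi\sqrt{n/6}}$ while $c_f\left(n\right)$ and $c_\phi\left(n\right)$ have strictly smaller exponential order, the term $\tfrac14 p\left(n\right)$ in $N\left(0,4;n\right)=\tfrac14 p\left(n\right)+\tfrac14 c_f\left(n\right)+\tfrac12 c_\phi\left(n\right)$ dominates, while the sign of $c_\phi\left(n\right)-c_f\left(n\right)$ is governed by the leading (alternating) term of the Andrews--Dragonette-type asymptotic for $c_f\left(n\right)$, yielding sign $\left(-1\right)^n$ for $n$ large; one then closes the gap between this bound and the thresholds $2n\ge2$, $2n+1\ge3$, $2n+1\ge9$ by direct verification. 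For Parts (1) and (3), which are genuine rank differences modulo $4$, one may instead quote the inequalities of Andrews--Lewis \cite{AndrewsLewis1} and Lewis \cite{Lewis1}. The main obstacle will be Part (2): unlike Parts (1) and (3), the quantity $2N\left(1,4;n\right)-N\left(2,4;n\right)=N\left(0,4;n\right)-c_f\left(n\right)$ is not a difference of two equidistributed residue-class counts but the difference of such a count and a mock theta coefficient of comparable sub-leading size, so the margin at the stated threshold $2n+1=9$ is small, and pinning it down will require an effective version of the asymptotics of \cite{Bringmann1} together with an explicit check of the boundary cases, rather than a soft comparison.
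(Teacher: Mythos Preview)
Your reduction is correct and matches the paper: you obtain $f_4\left(0,2\right)=g_4\left(1,2\right)$ and $f_4\left(0,1\right)=g_4\left(1\right)-\tfrac12 g_4\left(2\right)$, and Parts (1) and (3) then follow from Theorem~4 of Andrews--Lewis exactly as in the paper (including the observation $N\left(1,4;2n\right)-\tfrac12 N\left(2,4;2n\right)\ge N\left(1,4;2n\right)-N\left(2,4;2n\right)>0$ for Part (1) with $s=1$).

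The genuine difference is in Part (2). The paper rewrites $f_4\left(0,1\right)=\tfrac12\left(g_2\left(1\right)-g_4\left(2\right)\right)$ via $g_4\left(1\right)=\tfrac12 g_2\left(1\right)$ and then proves $N\left(1,2;n\right)>N\left(2,4;n\right)$ for odd $n\ge 13$ by an explicit combinatorial injection: a partition $\lambda$ of $n$ with rank $\equiv 2\pmod 4$ is sent to a partition with odd rank by one of four elementary moves depending on the pair $\left(\lambda_1,\lambda_2\right)$ of its two largest parts, and non-surjectivity is witnessed by $\left(n-3,3\right)$ or $\left(n-4,4\right)$. This is entirely elementary and leaves only $n<13$ to check by hand. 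Your route via effective circle-method asymptotics for $N\left(0,4;n\right)$ and $c_f\left(n\right)$ is valid in principle---note that your $N\left(0,4;n\right)-c_f\left(n\right)$ is literally the paper's $N\left(1,2;n\right)-N\left(2,4;n\right)$, so the target inequality is the same---and the required effective expansions exist (Dragonette/Andrews for $c_f$, \cite{Bringmann1} for $N\left(r,t;n\right)$). But making the error terms explicit enough to push the cutoff down to a checkable range is substantially more work than the paper's four-case injection, and imports analytic machinery that the paper avoids entirely. The injection also has the virtue of giving the refinement $N\left(1,4;2n-1\right)<N\left(2,4;2n-1\right)<2N\left(1,4;2n-1\right)$ for free.
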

\begin{remark}
Since the differences in Theorem \ref{thm:SmallInequality4} (1) (resp. (3)) are always nonnegative (resp. nonpositive), it would be interesting to investigate whether the difference enumerates an interesting combinatorial statistic.
\end{remark}
\begin{proof}
We begin by using \eqref{eqn:ftrsdefmod3} to write
\begin{multline}\label{eqn:f4gen}
f_4(r,s) = \frac{1}{8} \Big( i^r+i^{-r} -i^s-i^{-s}\Big)R\left(i;q\right)
+\frac{1}{16} \bigg(-2  \Big( i^r+i^{-r} -i^s-i^{-s}\Big)+(-1)^s-(-1)^r \bigg) R\left(-1;q\right) \\
+\frac{1}{16} \Big((-1)^r-(-1)^s \Big) R\left(1;q\right).
\end{multline}
We then rewrite $R\left(1;q\right)$, $R\left(i;q\right)$, and $R\left(-1;q\right)$ as a linear combination of $g_4\left(d\right)$ with $0\leq d\leq 2$ (after using conjugation \eqref{eqn:conjugation} as necessary), namely
\begin{align*}
R\left(1;q\right)&=g_4(0)+2g_4(1)+g_4(2),\\
R\left(-1;q\right)&=g_4\left(0,1\right)+g_4\left(2,1\right),\\
R\left(i;q\right)&=g_4(0,2).
\end{align*}
For $(r,s)=(0,2)$, simplification of \eqref{eqn:f4gen} yields
$$
f_4(0,2)= g_4(1,2).
$$
We now recall that Theorem 4 of Andrews and Lewis \cite{AndrewsLewis1} states that the $n$-th Fourier coefficient of $g_4(1,2)$ is positive for $n\geq 2$ even and negative for $n\geq 3$ odd.  The positivity (resp. negativity) of these Fourier coefficients hence establishes the $(r,s)=(0,2)$ case of part (1) (resp. part (3)). 

We now evaluate $f_4(0,1)$ by the above method and then split $g_4\left(a,b\right)=g_4(a)-g_4(b)$.  Since $g_4(1)=\frac{1}{2}g_2(1)$, simplification yields
\begin{equation}
\label{eqn:f401-2}
f_4(0,1)=g_4(1)-\frac{1}{2}g_4(2)=\frac{1}{2}\left(g_2(1)-g_4(2)\right).
\end{equation}
Since $N\left(2,4;2n\right)\geq 0$, (4.11) in Theorem 4 of Andrews and Lewis \cite{AndrewsLewis1} implies that 
$$
N\left(1,4;2n\right)-\frac{1}{2}N\left(2,4;2n\right)\geq N(1,4;2n)-N(2,4;2n)> 0
$$
whenever $n\geq 1$.  This combined with \eqref{eqn:f401-2} completes the proof of part (1).

In the remaining case, we cannot directly use the results of Andrews and Lewis, since their equation (4.12) only gives 
\begin{equation}\label{eqn:4diff}
N(1,4;2n-1)-N(2,4;2n-1)< 0
\end{equation}
whenever $n\geq 2$, while conversely the coefficients of $g_4(1)$ are nonnegative.  Hence we must compare the difference \eqref{eqn:4diff} with the coefficients of $g_4(1)$.  To do so, we will prove the following refinement of \eqref{eqn:4diff}.  We shall show that for all $n\geq 5$, one has
\begin{equation}\label{eqn:AndrewsLewisRefine}
N\left(1,4;2n-1\right)<N\left(2,4,2n-1\right)<2N\left(1,4;2n-1\right).
\end{equation}
By \eqref{eqn:f401-2}, we have
$$
f_4(0,1)=\frac{1}{2}\sum_{n=1}^{\infty} \Big(N(1,2;n)-N(2,4;n)\Big)q^n.
$$
We next show that for $n\geq 13$ odd we have 
\begin{equation}\label{eqn:N124}
N\left(1,2;n\right)- N\left(2,4;n\right) > 0,
\end{equation}
and the remaining cases will then follow by directly computing the first 13 coefficients of $f_4(0,1)$.  In order to show the inequality given in \eqref{eqn:N124}, we construct an injection of the partitions of $n$ with rank congruent to $2$ modulo $4$ to those with rank congruent to $1$ modulo $2$ whenever $n\geq 13$ odd.  We denote the largest  summand of a partition $\lambda$ of $n$ with rank congruent to $2$ modulo $4$ by $\lambda_1$ and the second largest by $\lambda_2$ and assume that $\lambda$ has $\ell$ parts. 
\begin{enumerate}
\item
If $\lambda_1\geq \lambda_2+2$ with $\left(\lambda_1,\lambda_2\right)\neq \left(3,1\right)$, then we change $\lambda_1\to \lambda_1-2$  and add in the extra summand $2$ to $\lambda$.
\item If $\left(\lambda_1,\lambda_2\right)=\left(3,1\right)$, then we change $\lambda_1\to \lambda_1+1$ and $\lambda_2\to \lambda_2+3$, while removing $4$ parts of size $1$ from $\lambda$.
\item If $\lambda_1<\lambda_2+2$ and $\lambda_2 \neq 1$, then we change $\lambda_2 \to \lambda_2-1$ and add in an extra summand $1$ to $\lambda$.
\item If $\lambda_1<\lambda_2+2$ and $\lambda_2=1$, then we change $\lambda_1 \to \lambda_1+6$ and $\lambda_2\to \lambda_2+3$, while removing $9$ parts of size $1$ from $\lambda$.
\end{enumerate}
It is easily verified that the partitions in the image of this mapping all have odd rank.  
Each of the cases (1)--(4) is obviously itself an injection, so we only need to check that the images of each are pairwise disjoint.  In case (1) the resulting partition has rank congruent to $-1$ modulo $4$.  Case (2) also yields partitions with rank congruent to $-1$ modulo 4, but with no parts of size 2, and hence its image is disjoint from the image in case (1).  In case (3) the rank is congruent to $1$ modulo $4$, so the rank splits the case (3) from (1) and (2).  In case (4) the partitions in the image do not have any parts of size $2$ and their ranks are congruent to $-1$ modulo $4$, splitting case (4) from cases (1) and (3).  To separate the cases (2) and (4) we note that the largest part occurs twice in case (4) and only once in case (2).

In order to get a strict inequality in \eqref{eqn:N124}, we must also show that this injection is not onto.  The image of our injection is restricted to partitions containing a part of size 1 or 2, forcing the desired strict inequality since for $n\geq 8$ one of the partitions $\left(n-3,3\right)$ and $\left(n-4,4\right)$ has rank congruent to $1$ modulo $2$ but no parts of size $1$ or $2$.

\end{proof}
The cases $t=5$ and $t=7$ will follow directly from the identities \eqref{eqn:Identity5} and \eqref{eqn:Identity7} in Theorems \ref{thm:IdentityTheorem5} and \ref{thm:IdentityTheorem7} combined with the inequalities given in Theorem 1.1 of \cite{BringmannKane1} (and the more precise version given in Tables 1 and 2 of the Appendix in \cite{BringmannKane1}).  However, as these are the only cases with $(t,6)=1$ which are not contained in Theorem \ref{thm:GeneralFullRankInequality}, we include the conclusions for completeness.
\begin{theorem} \label{thm:SmallInequality57}
\noindent
\begin{enumerate}
\item For $t=5$ we have the following inequalities:
\begin{itemize}
\item[(a)]
For $0<s\leq 4$ we have
$$
\NF_2\left(0,5;5n+2\right)\geq \NF_2\left(s,5;5n+2\right),
$$
where strict inequality is satisfied for $n>5$.
\item[(b)]
For $n\in \N$ and $0<s\leq 4$, one has the inequality
$$
\NF_2\left(0,5;5n\right)>\NF_2\left(s,5;5n\right).
$$
\item[(c)]
For every $n\geq 0$ and $0<s\leq 4$, it holds that 
$$
\NF_2\left(0,5;5n+3\right)<\NF_2\left(s,5;5n+3\right).
$$
\end{itemize}
\item
For $t=7$ the following inequalities hold:
\begin{itemize}
\item[(a)]  For $n\geq 20$, one obtains 
$$
\NF_2\left(0,7;7n\right)> \NF_2\left(1,7;7n\right)=\NF_2\left(3,7;7n\right).
$$
For all $n\geq 0$ we have
$$
\NF_2\left(0,7;7n+2\right)>\NF_2\left(1,7;7n+2\right).
$$
When $n>7$ we have 
$$
\NF_2\left(0,7;7n+3\right)<\NF_2\left(1,7;7n+3\right),
$$
$$
\NF_2\left(0,7;7n+4\right)>\NF_2\left(1,7;7n+4\right).
$$
For $n>4$ one has
$$
\NF_2\left(0,7;7n+6\right)<\NF_2\left(1,7;7n+6\right).
$$
\item[(b)]
If $(r,s)=(0,3)$ then for every $n\geq 0$ one has 
$$
\NF_2\left(0,7;7n+2\right)>\NF_2\left(3,7;7n+2\right),
$$
and for $n>1$  
$$
\NF_2\left(0,7;7n+6\right)<\NF_2\left(3;7n+6\right).
$$
\item[(c)]
For $(r,s)=(1,3)$ and $n>7$, we have 
$$
\NF_2\left(1,7;7n+2\right)<\NF_2\left(3,7;7n+2\right),
$$
$$
\NF_2\left(1,7;7n+3\right)>\NF_2\left(3,7;7n+3\right),
$$
$$
\NF_2\left(1,7;7n+4\right)<\NF_2\left(3,7;7n+4\right).
$$
Finally for all $n\geq 0$ we have 
$$
\NF_2\left(1,7;7n+6\right)>\NF_2\left(3,7;7n+6\right).
$$
\end{itemize}
\end{enumerate}
\end{theorem}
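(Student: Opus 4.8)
The plan is to reduce every full-rank difference appearing in the statement, for $t\in\{5,7\}$, to a single classical rank difference, and then to import the sign information for rank differences modulo $5$ and $7$ from \cite{AtkinSwinnertonDyer1} and \cite{BringmannKane1}. The reductions come from Corollary \ref{cor:General47Identity}, the identities \eqref{eqn:Identity5} and \eqref{eqn:Identity7}, the additivity $f_t(r,s)=f_t(r,r+1)+\dots+f_t(s-1,s)$, the vanishing $f_7(1,2)=0$ from Theorem \ref{thm:FullRankIdentity} (1), and the conjugation symmetry \eqref{eqn:fullrankconjugation}. Explicitly, $f_5(0,s)=g_5(1,2)$ for all $s\in\{1,2,3,4\}$ while $f_5(r,s)=0$ for $r,s\neq 0$ by \eqref{eqn:5rsneq0}, and $f_7(0,1)=f_7(0,2)=g_7(1,2)$, $f_7(1,3)=f_7(2,3)=g_7(2,3)$, $f_7(0,3)=g_7(1,3)$. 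Since the coefficient of $q^{tn+d}$ in $g_t(a,b)$ equals the coefficient of $q^n$ in $g_{t,d}(a,b)$, it then suffices to determine, for each relevant residue $d$, the sign of the Fourier coefficients of $g_{t,d}(a,b)$ for all large $n$, together with a finite check at small $n$.

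For $t=5$, since $f_5(0,s)=g_5(1,2)$ is independent of $s$, part (1) amounts entirely to the sign of the coefficients of $g_{5,d}(1,2)$ in the five residues $d\in\{0,1,2,3,4\}$. Theorem \ref{thm:IdentityTheorem5} (1) gives $g_{5,1}(1,2)=g_{5,4}(1,2)=0$ (the equalities in the progressions $5n+1,\,5n+4$), while Theorem \ref{thm:IdentityTheorem5} (2), (3) --- equivalently Theorem 4 of \cite{AtkinSwinnertonDyer1} --- realize $g_{5,0}(1,2)$, $g_{5,2}(1,2)$, $g_{5,3}(1,2)$ as an explicit weakly holomorphic modular form or mock theta function, whose coefficients are positive in the progressions $5n$ and $5n+2$ and negative in $5n+3$ by Theorem 1.1 of \cite{BringmannKane1} (and Tables 1--2 of its Appendix). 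This yields (a), (b), (c); the strictness thresholds (e.g.\ $n>5$ in (a) and $n\geq 1$ in (b)) are read off from the same tables after computing the first few coefficients directly.

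The case $t=7$ is handled the same way: part (a) reduces to the sign of the coefficients of $g_{7,d}(1,2)$, part (b) to those of $g_{7,d}(1,3)$, and part (c) to those of $g_{7,d}(2,3)$. Theorem \ref{thm:IdentityTheorem7} (1) supplies the vanishing in the progressions $7n+1$, $7n+5$ and the equality $f_{7,0}(1,3)=0$, i.e.\ $\NF_2(1,7;7n)=\NF_2(3,7;7n)$, while Theorem \ref{thm:IdentityTheorem7} (2)--(4), i.e.\ Theorem 5 of \cite{AtkinSwinnertonDyer1}, expresses the remaining $g_{7,d}(a,b)$ as explicit theta functions, mock theta functions, or eta-quotient-type infinite products. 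The sign of their coefficients and the exact $n$ at which each strict inequality begins are recorded in Theorem 1.1 of \cite{BringmannKane1} and Tables 1--2 of its Appendix, with the finitely many small cases checked directly.

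The work here is essentially bookkeeping. The one delicate point is to track the conjugation symmetries \eqref{eqn:conjugation} and \eqref{eqn:fullrankconjugation} so that each $f_t(r,s)$ is matched with the correct $g_t(a,b)$ whose coefficient behaviour is tabulated in \cite{BringmannKane1}, and then to line up the explicit thresholds in the statement ($n>5$, $n>7$, $n>4$, $n>1$, and so on) with the bounds in Tables 1--2 of \cite{BringmannKane1}, clearing the residual low-order coefficients by a short computation. In contrast to Theorem \ref{thm:GeneralFullRankInequality}, no new asymptotic analysis is required, since for these small $t$ the relevant rank differences are governed directly by the Atkin--Swinnerton-Dyer identities rather than by the growth of the coefficients.
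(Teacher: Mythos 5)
Your proposal is correct and follows essentially the same route as the paper, which disposes of Theorem \ref{thm:SmallInequality57} in a single sentence by combining the identities \eqref{eqn:Identity5} and \eqref{eqn:Identity7} (together with $f_7(1,2)=0$ and additivity, giving exactly the reductions $f_5(0,s)=g_5(1,2)$, $f_7(0,1)=f_7(0,2)=g_7(1,2)$, $f_7(1,3)=f_7(2,3)=g_7(2,3)$, $f_7(0,3)=g_7(1,3)$ that you list) with the sign information for $g_{t,d}(a,b)$ from Theorem 1.1 and Tables 1--2 of \cite{BringmannKane1}. Your additional bookkeeping, including the observation that the thresholds such as $n>5$, $n>7$, $n>4$ come from the tables plus a finite check of low-order coefficients, is exactly what the paper leaves implicit.
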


\end{document}